\newtheorem{defn}{Definition}
\newtheorem{thm}{Theorem}
\newtheorem{lem}{Lemma}
\newtheorem{prop}{Proposition}
\newtheorem{rmk}{Remark}
\newtheorem{exam}{Example}
\newtheorem{coro}{Corollary}
\newtheorem{claim}{Claim}
\newcommand{\bE}{\mathbb{E}}
\newcommand{\bR}{\mathbb{R}}
\newcommand{\cB}{\mathcal{B}}
\newcommand{\cD}{\mathcal{D}}
\newcommand{\cF}{\mathcal{F}}
\newcommand{\cG}{\mathcal{G}}
\newcommand{\cH}{\mathcal{H}}
\newcommand{\cI}{\mathcal{I}}
\newcommand{\cM}{\mathcal{M}}
\newcommand{\cR}{\mathscr{R}}
\newcommand{\cS}{\mathcal{S}}
\DeclareMathOperator{\rmd}{d\!}
\begin{document}

\title{Stationary Markov Perfect Equilibria in Discounted Stochastic Games\footnote{The authors are grateful to Rabah Amir, Darrell Duffie, Matthew Jackson, Jiangtao Li, Xiang Sun, Yifei Sun, Satoru Takahashi, Bin Wu and Yongchao Zhang for helpful discussions. This work was presented in Institute of Economics, Academia Sinica, Taipei, November 2013; 5th Shanghai Microeconomics Workshop, June 2014; Asian Meeting of the Econometric Society, Taipei, June 2014; China Meeting of the Econometric Society, Xiamen, June 2014; 14th SAET Meeting, Tokyo, August 2014; the Conference in honor of Abraham Neyman, Jerusalem, June 2015; Department of Economics, Rutgers University, October 2016; and 2016 NSF/NBER/CEME Mathematical Economics Conference, Johns Hopkins University, October 2016. This research was supported in part by the Singapore Ministry of Education Academic Research Fund Tier 1 grants R-122-000-227-112 and R-146-000-170-112. This version owes substantially to the careful reading and expository suggestions of an editor, an associate editor and two referees.
}}
\author{Wei~He\thanks{Department of Economics, Chinese University of Hong Kong, Hong Kong. E-mail: he.wei2126@gmail.com}
\and
Yeneng~Sun\thanks{Department of Economics, National University of Singapore, 1 Arts Link, Singapore 117570. Email: ynsun@nus.edu.sg}
}

\date{\today}

\maketitle

\abstract{\singlespace{The existence of stationary Markov perfect equilibria in stochastic games is shown under a general condition called ``(decomposable) coarser transition kernels''. This result covers various earlier existence results on correlated equilibria, noisy stochastic games, stochastic games with finite actions and state-independent transitions, and stochastic games with mixtures of constant transition kernels as special cases. A remarkably simple proof is provided via establishing a new connection between stochastic games and conditional expectations of correspondences. New applications of stochastic games are presented as illustrative examples, including stochastic games with endogenous shocks and a stochastic dynamic oligopoly model.}

{\singlespace{\textbf{Keywords}: Stochastic game, stationary Markov perfect equilibrium, (decomposable) coarser transition kernel, endogenous shocks, dynamic oligopoly.}
}}

\newpage

\tableofcontents

\newpage

\section{Introduction}\label{sec-intro}

The class of stochastic games enriches the model of repeated games by allowing the stage games to vary with some publicly observable states. It has considerably widened the applications of repeated games.\footnote{See, for example, the book \cite{NS2003} (in particular the survey chapter \cite{Amir2003} on economic applications), and the recent survey chapters \cite{JN2016a} and \cite{JN2016b}.} In particular, a stochastic game is played in discrete time by a finite set of players and the past history is observable by all the players. At the beginning of each stage, Nature draws some state randomly. After the state is realized, the players choose actions and each player receives a stage payoff that depends on the current state and the actions. The game then moves to the next stage and a new random state is drawn, whose distribution depends on the previous state and chosen actions. The procedure is repeated at the new state. Each player's total payoff is the discounted sum of the stage payoffs.

A strategy for a player in a stochastic game is a complete plan of actions, which specifies a feasible action for the player in every contingency in which the player might be called on to act.  However, the so-called Markov strategies, which only depend on the current state instead of the entire past history of states and action profiles, have received much attention in the literature. As noted in \cite{MT2001}, the concept of Markov perfect equilibrium, which requires the players to use only Markov strategies, embodies various practical virtues and philosophical considerations, including conceptual and computational simplicity. Given that the relevant parameters in a stochastic game are time-independent, it is natural to require the Markov strategies to be time-independent as well. Equilibria based on such strategies are called stationary Markov perfect equilibria. In a stationary Markov perfect equilibrium, any subgames with the same current states
will be played exactly in the same way. So ``bygones'' are really ``bygones''; i.e., the past history does not matter at all.

Beginning with \cite{Shapley1953}, the existence of stationary Markov perfect equilibria in discounted stochastic games remains an important problem. Existence results on such equilibria in stochastic games with compact action spaces and finite/countable state spaces have been established in a number of early papers.\footnote{The result for zero-sum games with finite actions and states was established in the seminal paper by \cite{Shapley1953}.
The case with finite state spaces and compact action spaces was shown in \cite{Fink1964} and \cite{Takahashi1964}. The result in \cite{Shapley1953} was extended by \cite{Parthasarathy1973} to nonzero-sum games with finite action spaces and  countable state spaces. The existence result for countable state spaces and compact action spaces was then proved by \cite{Federgruen1978}. Approximate equilibrium has been considered in \cite{Nowak1985} and \cite{Whitt1980}. For more detailed discussions about the literature on stochastic games, see the surveys \cite{JN2016a, JN2016b}. \label{fn-literature}} Given the wide applications of stochastic games with general state spaces in various areas of economics, the existence of equilibria in stationary strategies for such games has been extensively studied in the last two decades. However, no general existence result, except for several special classes of stochastic games, has been obtained in the literature so far.\footnote{Stochastic games possessing strategic complementarities were studied in \cite{Amir2002}, \cite{BRW2014}, \cite{Curtat1996}, \cite{JN2015}, \cite{Nowak2007} and \cite{Vives2009}; see Section 6 in the survey  \cite{Vives2005} for further discussions. A related class of stochastic games in which the interaction between different players is sufficiently weak was studied in \cite{Horst2005}. The existence of stationary $p$-equilibria for two-player games with  finite actions and strong separability conditions on both stage payoffs and state transitions was proved in \cite{HPRV1976}.  The existence of stationary Markov perfect equilibria in intergenerational stochastic games was considered in \cite{JN2014} and \cite{Nowak2006}.
Stochastic games with sub-modularity were studied in \cite{Amir1996} and \cite{BN2008}. All these papers impose special conditions on the payoff functions and state transitions. }

The existence of stationary Markov perfect correlated equilibria for stochastic games was proved in \cite{DGMM1994} and \cite{NR1992}.\footnote{Additional ergodic properties were obtained in \cite{DGMM1994} under stronger conditions.} These papers assumed that there is a randomization device publicly known to all players which is irrelevant to the fundamental parameters of the game. Stationary Markov perfect equilibria have been shown to exist in \cite{Duggan2012}, \cite{Nowak2003} and \cite{PS1989} for stochastic games with some special structures on the state transitions. The paper \cite{PS1989} focused on stochastic games with finite actions and state-independent transitions. A class of stochastic games has been studied in \cite{Nowak2003}, where the transition probability has a fixed countable set of atoms and its atomless part is a finite combination of atomless measures that do not depend on states and actions.  Stochastic games with a specific product structure, namely stochastic games with noise, were considered in \cite{Duggan2012}. In particular, the noise variable in \cite{Duggan2012} is a history-irrelevant component of the state and could influence the payoff functions and transitions. Recently, it was shown in \cite{Levy2013} and \cite{LM2015} that a stochastic game satisfying the usual conditions as stated in Section \ref{sec-game} below may not have any stationary Markov perfect equilibrium.\footnote{An error in the relevant example of \cite{Levy2013} was pointed out in \cite{LM2015}, and a new example was presented therein. In the new example, the discounted stochastic game has a continuum of states, finitely many players and actions, where all the state transitions are absolutely continuous with respect to a fixed measure. Since there are only finitely many actions in this stochastic game, any continuity assumptions in terms of actions will automatically be satisfied.} This implies that a general existence result could only hold under some suitable conditions.

The first contribution of this paper is to introduce a general condition to guarantee the existence of stationary Markov perfect equilibria in stochastic games. Based on this condition, we unify various existence results in the literature as discussed in the previous paragraph,\footnote{All those papers work with general payoffs, but impose special structure on the state transitions.} and also provide a new class of stochastic games that is useful for economic applications and cannot be handled by existing  results. Our second contribution is methodological. We establish for the first time a connection between the equilibrium payoff correspondences in stochastic games and a general result on the conditional expectations of correspondences, and hence are able to provide a simple proof for the existence of stationary Markov perfect equilibria. In the following paragraphs, we will discuss our condition and results in details.

The transition probability in a general stochastic game is defined in terms of the actions and state in the previous period. As discussed above, it is assumed in \cite{DGMM1994}, \cite{Duggan2012}, \cite{NR1992} and \cite{PS1989} that the actions and state in the previous period do not enter the transition of the sunspot/noise/shock component of the states.
As a result, a key component of the transition probability in those papers is not influenced by the actions and state in the previous period.
However, dynamic economic models with random shocks are common, and it may seem natural to assume that the transition of shocks can endogenously depend on some important factors (actions/states) in the previous stage. As shown by the counterexample in \cite{LM2015}, without any restriction on the transition kernel,\footnote{Here the transition kernel means the Radon-Nikodym derivative of the transition probability with respect to some reference measure on the state space; see Footnote \ref{fn-RND} for the definition of Radon-Nikodym derivative.} a stationary Markov perfect equilibrium may not exist. We introduce a new model called ``stochastic games with endogenous shocks'',\footnote{In contrast to the sunspot idea in \cite{DGMM1994} and \cite{NR1992}, the innovation of \cite{Duggan2012} is to allow the noise to be part of the stage payoffs. This feature is also shared by stochastic games with endogenous shocks as considered in this paper.} which allows the distribution of current period's shocks to directly depend on the ``discrete'' components of the states and actions from the previous period.\footnote{Dynamic economic models with some discrete components of actions and states are common, such as entry or exit by firms in \cite{EP1995} and \cite{Hopenhayn1992}.}

In order to prove an equilibrium existence result in general stochastic games that cover stochastic games with endogenous shocks and the class of games in which the state transitions have a decomposable feature (see, for example, \cite{Nowak2003} and \cite{PS1989}), we propose a condition called ``decomposable coarser transition kernels'' in the sense that the transition kernel is decomposed as a sum of finitely many components with each component being the product of a ``coarser" history-relevant transition function and a history-irrelevant density function.
For the case of stochastic games with endogenous shocks,  the history-irrelevant density functions represent the shocks. In particular, each discrete component of the states and actions from the previous period naturally contributes one component  in the transition kernel, which describes the history dependence of the distribution of the shock. It thus captures the intuition that the distribution of the random shock can directly depend on the actions and states in the previous period.

Our Theorem~\ref{thm-D_existence} shows that under the condition of a decomposable coarser transition kernel, a stochastic game has a stationary Markov perfect equilibrium. A very simple proof is given by providing a new link between a convexity type result of \cite{DE1976} on the conditional expectation of a correspondence and the equilibrium existence problem in stochastic games. As a corollary to Theorem~\ref{thm-D_existence}, we know that a stationary Markov perfect equilibrium exists in a stochastic game with endogenous shocks. Theorem \ref{thm-atom} extends Theorem~\ref{thm-D_existence} by including an atomic part in the transition probability, and  covers the main existence result in \cite{Nowak2003} as a special case. As an illustrative application of stochastic games with endogenous shocks, we consider a stochastic version of the dynamic oligopoly model as studied in \cite{MT1988a,MT1988b}.\footnote{Further applications will be  discussed in Remark \ref{rmk-ext}.} Note that our results on stochastic games with endogenous shocks and the stochastic dynamic oligopoly model cannot be covered by existing  results in the relevant literature, because the transition of the shock component in the state explicitly depends on the parameters in the previous stage.

To study a dynamic problem with a stationary structure, the standard approach is to work with a reduced problem with a recursive structure, in which players' payoffs are given by a convex combination of the stage payoffs and players' expected continuation values in  terms of the Bellman equations. To solve the existence problem for a stochastic game, one often needs to work with a one-shot auxiliary game parameterized by state variable $s$ and continuation value function $v$, in which the set of Nash equilibrium payoffs is denoted by  $P_v(s)$.  Though the correspondence $P_v(\cdot)$ is closed valued and upper hemicontinuous in terms of $v$, it is not convex valued in general. As a result, each of the desirable convexity, closedness and upper hemicontinuity properties would fail for the correspondence $R$ from the space of continuation value functions to itself, whose value at a continuation value function $v$ is the collection of measurable selections of the correspondence $P_v(\cdot)$. Hence, the classical Fan-Glicksberg Fixed-point Theorem (see Corollary 17.55 in  \cite{AB2006}) is not applicable to the correspondence $R$. Note that a fixed point of the correspondence $R$ will correspond to a stationary Markov perfect equilibrium of the stochastic game.\footnote{For details, see Subsection~\ref{subsec-coarser proof}.} On the other hand, the convex hull correspondence $\mbox{co}(R)$ of $R$ does have the above desirable properties. Thus, $\mbox{co}(R)$ has a fixed point, which corresponds to a stationary Markov perfect {\it correlated} equilibrium of the stochastic game as in \cite{DGMM1994} and \cite{NR1992}. The key insight of our proof is that this imposed convexity restriction can be relaxed by showing the equivalence of the conditional expectations of $\mbox{co}(R)$ and $R$ under the condition of a decomposable coarser transition kernel, which leads to a fixed point of the correspondence $R$ (instead of $\mbox{co}(R)$). The minimality of our condition for this ``one-shot game'' approach is also demonstrated from a technical point of view in the sense that the conditions are shown to be tight as in Propositions \ref{prop-atom} and \ref{prop-finite}.

The rest of the paper is organized as follows. Section~\ref{sec-game} presents a general model of discounted stochastic games. The main result is given in Section~\ref{sec-results}. Stochastic games with endogenous shocks and a stochastic dynamic oligopoly model are considered in Section~\ref{sec-application}. Section~\ref{sec-atomic} provides an  extension of the main existence result by allowing the transition kernel to have an atomic part. Section~\ref{sec-lite} discusses the relationship between our existence theorems and several related results in the literature, and also demonstrates the minimality of our condition. Section~\ref{sec-conclusion} concludes the paper. The Appendix collects the proofs.

\section{Discounted Stochastic Games}\label{sec-game}

An $m$-person discounted stochastic game can be defined in terms of (1) a state space, (2) a state-dependent feasible action correspondence for each player, (3) a stage-payoff for each player that depends on the state and action profile, (4) a discount factor for each player, and (5) a transition probability that depends on the state and action profile.
Formally, an $m$-person discounted stochastic game is described as follows:
\begin{itemize}
  \item $I=\{1,\cdots, m\}$ is the set of players.

  \item  $(S,\cS)$ is a measurable space representing the states of nature, where $\cS$ is countably generated.\footnote{It means that there is a countable subset $\cD$ of $\cS$ such that $\cS$ is generated by the sets in $\cD$. This condition is widely adopted. For example, the state spaces in \cite{DGMM1994}, \cite{Duggan2012}, and \cite{Levy2013} are respectively assumed to be a compact metric space, a Polish space, a Borel subset of a Polish space, and hence are all countably generated.}

  \item For each $i\in I$, $X_i$ is player~$i$' action space, which is a nonempty compact metric space endowed with its Borel $\sigma$-algebra $\cB(X_i)$. Let $X= \prod_{1\leq i \leq m}X_i$ and $\cB(X) = \otimes_{1 \le i \le m} \cB(X_i)$. Then $X$ is the space of all possible action profiles.

  \item For each $i\in I$, the set of feasible actions of player $i$ at state $s$ is given by $A_i(s)$, where $A_i$ is an $\cS$-measurable,\footnote{The correspondence $A_i$ is said to be $\cS$-measurable (resp. weakly $\cS$-measurable) if for any closed (resp. open) subset $B$ of $X_i$, the set $\{s \in S: A_i(s) \cap B \ne \emptyset\}$ belongs to $\cS$; see Definition 18.1 of \cite{AB2006}. In the literature, there are some papers which assume that the correspondence $A_i$ is weakly measurable; see, for example, \cite{Duggan2012} and \cite{NR1992}. These two measurability notions coincide in our setting since $A_i$ is compact valued; see Lemma~18.2 in \cite{AB2006}.} nonempty and compact valued correspondence from $S$  to $X_i$. Let $A(s) = \prod_{i\in I} A_i(s)$ for each $s \in S$.

  \item  For each $i\in I$, $u_i: S\times X\to \bR$ is player~$i$'s stage-payoff with an absolute bound $C$ (i.e., for every $i\in I$ and $(s,x) \in S\times X$, $|u_i(s,x)| \le C$ for some positive real number $C$) such that $u_i(s,x)$ is $\cS$-measurable in $s$ for each $x\in X$ and continuous in $x$ for each $s \in S$.

  \item $\beta_i\in[0,1)$ is player $i$'s discount factor.

  \item The law of motion for the states is given by the transition probability $Q: S\times X \times \cS \to [0,1]$.\footnote{Note that the payoff $u_i$ and the transition probability $Q$ only need to be defined on the graph of $A$. For simplicity, we follow the literature to define them on the whole product space $S \times X$, as in \cite{DGMM1994}, \cite{Levy2013}, \cite{LM2015} and \cite{NR1992}.} That is, if $s$ is the state at stage $t$ and $x\in X$ is the action profile chosen simultaneously by the $m$ players at this stage, then $Q(E|s,x)$ is the probability that the state at stage $t+1$ belongs to the set $E$ given $s$ and $x$.
     \begin{enumerate}
        \item $Q(\cdot|s,x)$ (abbreviated as $Q_{(s,x)}$) is a probability measure on $(S,\cS)$ for all $s \in S$ and $x\in X$, and for all $E\in \cS$, $Q(E|\cdot, \cdot)$ is $\cS \otimes \cB(X)$-measurable.

        \item For all $s$ and $x$, $Q(\cdot|s,x)$ is absolutely continuous with respect to a probability measure $\lambda$ on $(S,\cS)$.  Let $q$ be an $\cS \otimes \cS \otimes \cB(X)$-measurable function from $S \times S \times X$ to $\mathbb{R}_+$ such that for any $s \in S$ and $x\in X$, $q(\cdot,s,x)$ (also written as $q(\cdot|s,x)$ or $q_{(s,x)}$) is the corresponding Radon-Nikodym derivative of $Q(\cdot|s,x)$.\footnote{Let $(S, \cS, \lambda)$ be a probability space. A finite measure $\nu$ is said to be absolutely continuous with respect to $\lambda$ if for any $D \in \cS$, $\lambda (D) =0$ implies $\nu (D) =0$. In this case, there exists a ($\lambda$-almost) unique $\lambda$-integrable function $q$ such that $\nu(D) = \int_D q(s) \lambda(\rmd s)$ for any $D \in \cS$. Such a function $q$ is called the Radon-Nikodym derivative of $\nu$ with respect to $\lambda$, see p.134 in \cite{Loeb-16}. \label{fn-RND}}

        \item For all $s \in S$, the mapping $x \to Q(\cdot | s, x)$ is norm continuous; that is, for any sequence of action profiles $\{x^n\}$ converging to some $x^0$, the sequence $\{Q(\cdot | s, x^n) \}$ converges to $Q(\cdot | s, x^0)$ in total variation.\footnote{The total variation distance of two probability measures $\mu$ and $\nu$ on $(S, \cS)$ is $\left\|\mu -\nu \right\|_{{TV}} = \sup _{{D\in {\cS}}}|\mu (D)-\nu (D)|$. A sequence of probability measures $\{\mu_n\}$ is said to be convergent to a probability measure $\mu_0$ in total variation if $\lim_{n \to \infty} \left\|\mu_n -\mu_0 \right\|_{{TV}} = 0$.}

        \end{enumerate}
\end{itemize}

As in \cite{MT2001}, a history of a stochastic game up to stage $t\ge 1$ can be defined as follows. We use
$s_t$ and $\mathbf{x}_t$ to describe respectively the state and action profile in stage~$t$.
Let $h_1 = s_1 \in S$, $h_t = (s_1, \mathbf{x}_1, s_2, \ldots, \mathbf{x}_{t-1}, s_t)$ for $t \ge 2$, where $\mathbf{x}_{j-1} \in A(s_{j-1})$ and $s_j \in S$ for $2\le j \le t$, and $H_t$ be the space of all such $h_t$.
A strategy $f_i$ of player~$i$ specifies, for each stage $t\ge 1$, a measurable mapping (to be called a mixed action plan at stage $t$) from the space $H_t$ to the set of player~$i$'s mixed actions
$\cM(X_i)$,\footnote{For a Borel set $D$ in a complete separable metric space, let $\cM(D)$ be the space of all Borel probability measures on $D$.} which places probability $1$ on the set of feasible actions $A_i(s_t)$ at each state $s_t \in S$. For any profile of strategies $f = \{f_i\}_{i \in I}$ of the players and every initial state $s_1 \in S$, a probability measure $P_{s_1}^f$ is defined on $(S \times X)^{\infty}$ in a canonical way; see, for example, \cite{BS1978}. Given the strategy profile $f$ in the game starting from the state $s_1$, the expected payoff of player~$i$ is $\bE_{s_1}^f \left( \sum_{t = 1}^{\infty} \beta_i^{t-1} u_i(s_t, \mathbf{x}_t) \right)$, where the expectation is taken with respect to the probability measure $P_{s_1}^f$. A strategy for a player is called a Markov strategy if the mixed action plan specified for each stage $t\ge 1 $ only depends on the state $s_t \in S$. As highlighted in \cite{MT2001}, Markov strategies have the advantage that these strategies only depend on payoff-relevant data in the game.

In this paper, we shall focus on a particular class of Markov strategies, namely the ``stationary Markov strategies'', in which a player makes her decision based only on the current state but not the calendar time. Namely, the mixed action plan specified for each stage $t\ge 1 $ is the same mapping. Stationary Markov strategies are natural for the discounted payoff evaluation, since subgames starting at the same state are strategically equivalent in the sense that players have the same payoffs in these subgames. In addition, stationary Markov strategies are particularly useful because they are easy to analyze. Formally, a stationary Markov strategy for player~$i$ is an $\cS$-measurable mapping $f_i: S \to \cM(X_i)$ such that $f_i(s)$ places probability $1$ on the set $A_i(s)$ for each $s\in S$.\footnote{Since $A_i$ is measurable, nonempty and compact valued, the correspondence $A_i$ has a measurable graph by Theorem~18.6 in \cite{AB2006}. Then by Corollary~18.27 in \cite{AB2006}, $A_i$ has a measurable selection. As a result, the set of stationary Markov strategies of player~$i$ is nonempty for each $i$.} A stationary Markov strategy profile $f$ is called a \textbf{stationary Markov perfect equilibrium} if
$$\bE_{s_1}^f \left( \sum_{t = 1}^{\infty} \beta_i^{t-1} u_i(s_t,\mathbf{x}_t) \right) \ge \bE_{s_1}^{(g_i, f_{-i})} \left( \sum_{t = 1}^{\infty} \beta_i^{t-1} u_i(s_t,\mathbf{x}_t) \right)
$$
for any $i \in I$, $s_1 \in S$ and any strategy $g_i$ of player~$i$.

In the following, we shall consider stationary Markov perfect equilibria in terms of the recursive structure, which is much easier to work with. By standard results in dynamic programming and stochastic games (see, for example, \cite{Blackwell1965} and \cite{NR1992}), this formulation is equivalent to the equilibrium notion defined above. Given a stationary Markov strategy profile $f$, the continuation value $v(\cdot, f)$ gives an essentially bounded $\cS$-measurable mapping from $S$ to $\bR^m$, which is uniquely determined by the following recursion
\begin{equation} \label{eq-v_i}
v_i(s,f)=\int_{X} \left[(1-\beta_i)u_i(s,x)+\beta_i\int_S v_i(s_1,f) Q(\rmd s_1|s,x)\right]f(\rmd x|s).\footnote{The existence and uniqueness of the continuation value of each player follows from a standard contraction mapping argument, see \cite{Blackwell1965}.}
\end{equation}
The strategy profile $f$ is a \textbf{stationary Markov perfect equilibrium} if the discounted expected payoff of each player $i$ is maximized by her strategy $f_i$ in every state $s \in S$. It means that the continuation value $v$ solves the following recursive maximization problem:
\begin{eqnarray} \label{eq-sge}
v_i(s,f)
 =&&\max_{x_i\in A_i(s)} \int_{X_{-i}} \big[(1-\beta_i)u_i(s,x_i,x_{-i}) \nonumber \\
&& +\beta_i\int_S v_i(s_1,f) Q(\rmd s_1|s,x_i,x_{-i}) \big] f_{-i}(\rmd x_{-i}|s),
\end{eqnarray}
where $x_{-i}$ and $X_{-i}$ have the usual meanings, and $f_{-i}(s)$ is the product probability $\otimes_{j\neq i} f_j(s)$ on the product of the action spaces of all players other than player~$i$ at the state $s$.

\section{Main Result}\label{sec-results}

In this section, we show the existence of stationary Markov perfect equilibria for discounted stochastic games. In particular, we introduce the notion of ``(decomposable) coarser transition kernel'' and present our main result. Subsection~\ref{subsec-coarser proof} provides a simple proof via establishing a new connection between stochastic games and conditional expectations of correspondences.

Before moving to the statement of the condition and the result, we need a formal concept of an atom over a sub-$\sigma$-algebra. Let $\lambda$ be a probability measure on the measurable state space $(S,\cS)$. Let $\cG$ be a sub-$\sigma$-algebra of $\cS$. A set $D\in\cS$ of positive measure is said to be a $\cG$-atom if the restricted $\sigma$-algebras  of $\cG$ and $\cS$ to $D$ are essentially the same. When the relevant $\sigma$-algebras are used to represent information,\footnote{For example, in the context of a stochastic game, $\cS$ represents the total information about the state space while $\cG$ represents the information generated by the transition kernel $q$.} an event $D \in \cS$ is a $\cG$-atom simply means that given the realization of event $D$, $\cS$ and $\cG$ carry the same information. Formally,
let $\cG^D$ and $\cS^D$ be the respective $\sigma$-algebras $\{ D\cap D' \colon D' \in \cG \}$ and $\{ D\cap D' \colon D' \in \cS \}$ on $D$. The set $D \in \cS$ is said to be a $\cG$-atom\footnote{The notion of a $\cG$-atom has been considered in \cite{HN1966} and \cite{Neveu1967}, see also Definition~1.3.3 in \cite{Jacobs1978}. The authors of this paper are grateful to an anonymous referee for providing these references.} if the strong completion of $\cG^D$ is $\cS^D$.\footnote{The strong completion of $\cG^D$ in $\cS$ under $\lambda$ is the set of all sets in the form $E \triangle E_0$, where $E \in \cG^D$ and $E_0$ is a $\lambda$-null set in $\cS^D$, and $E \triangle E_0$ denotes the symmetric difference $(E\setminus E_0) \cup (E_0 \setminus E)$.}

\begin{defn}\label{defn-coarser}
A discounted stochastic game is said to have a coarser transition kernel if for some sub-$\sigma$-algebra $\cG$ of $\cS$, $q(\cdot|s,x)$ is $\cG$-measurable for all $s\in S$ and $x\in X$, and $\cS$ has no $\cG$-atom (under $\lambda$).\footnote{When $\cG$ is the trivial $\sigma$-algebra $\{S, \emptyset\}$, $\cS$ has no $\cG$-atom if and only if $\lambda$ is atomless. If $(S,\cS, \lambda)$ has an atom $D \in \cS$ in the usual sense (namely, $D$ is an atom over the trivial $\sigma$-algebra), then $D$ is automatically a $\cG$-atom for any sub-$\sigma$-algebra $\cG$ of $\cS$. It also means that if $\cS$ has no $\cG$-atom under $\lambda$, then $(S,\cS, \lambda)$ is atomless. }

A discounted stochastic game is said to have a decomposable coarser transition kernel if for some sub-$\sigma$-algebra $\cG$ of $\cS$, $\cS$ has no $\cG$-atom (under $\lambda$) and for some positive integer $J$,
$q(s_1|s,x) = \sum_{1\leq j\leq J}  q_j(s_1,s,x)\rho_j(s_1)$, where $q_j$ is $\cS \otimes \cS \otimes \cB(X)$-jointly measurable and $q_j(\cdot, s,x)$ is $\cG$-measurable for each $s\in S$ and $x\in X$, $q_j(\cdot, s,x)$ and $\rho_j$ are nonnegative and integrable on the atomless probability space $(S,\cS,\lambda)$, $j=1,\ldots,J$.

\end{defn}

Note that the condition of ``coarser transition kernel'' is a special case of the condition of ``decomposable coarser transition kernel''.

Below, we provide a simple example to illustrate the condition of ``a decomposable coarser transition kernel''.

\begin{exam} \label{ex-1}
Suppose that the state space is $S = [0,1] \times [0,1]$ (resp. $S_1 = [0,1] \times [0,1]$) with the generic element $s = (h, r)$ (resp. $s_1 = (h_1, r_1)$), $\lambda$ is the uniform distribution on the unit square, and the space of action profiles is $X \subseteq \bR^l$. For simplicity, the density function constructed below does not depend on $x \in X$.

Let
$$\rho_1(s_1) = h_1, \qquad \rho_2(s_1) = r_1;$$
$$q_1(s_1, s) = \frac{h_1 + h}{2/3 + h }, \qquad q_2(s_1, s) = \frac{2h_1 + r}{1 + r};$$
$$q(s_1|s) = q(s_1,s)= q_1(s_1,s)\rho_1(s_1) + q_2(s_1,s)\rho_2(s_1).$$
It is easy to check that the integration of $q(s_1, s)$ with respect to $s_1$ on $S_1$ is $1$ for any $s\in S$.

Let $\cS$ be the Borel $\sigma$-algebra on $[0,1] \times [0,1]$ and $\cG = \cB([0,1]) \otimes \{\emptyset, [0,1]\}$, where $\cB([0,1])$ is the Borel $\sigma$-algebra on $[0,1]$. For any $s \in S$, both $q_1(\cdot, s)$ and $q_2(\cdot, s)$ are linear functions of $h_1$ and do not depend on $r_1$. Thus, the $\sigma$-algebra generated by $\{q_1(\cdot, s), q_2(\cdot, s) \}_{s \in S}$ is $\cG$.\footnote{A $\sigma$-algebra is said to be generated by a collection of mappings taking values in some complete separable metric spaces if it is the smallest $\sigma$-algebra on which these mappings are measurable.} It is clear that $\cS$ has no $\cG$-atom.\footnote{For example, let $D$ be the subset $[0, 1/2] \times [0, 1]$ of $S$. Then, the set $[0, 1/2] \times [1/2, 1]$ is in $\cS^D$ but not in $\cG^D$, which means that $D$ is not a $\cG$-atom.} As a result, the density function $q$ satisfies the condition of a decomposable coarser transition kernel.

Notice that the functions $\{q(\cdot, s)\}_{s \in S}$ are measurable with respect to the $\sigma$-algebra $\cS$. Below, we show that $\cS$ is indeed generated by the collection of functions $\{q(\cdot, s)\}_{s \in S}$. The proof is left in the Appendix.
\end{exam}

\begin{claim} \label{claim-1}
In Example~\ref{ex-1}, the $\sigma$-algebra generated by $\{q(\cdot, s)\}_{s \in S}$ is $\cS$.
\end{claim}

The measurability requirements on the action correspondences, the stage payoffs, the transition probability and the transition kernel simply mean that the $\sigma$-algebra $\cS$ is generated by the four collections of mappings $\{A_i(\cdot)\}_{i \in I}$, $\{u_i(\cdot, x)\}_{i \in I, x \in X}$, $\{Q(E|\cdot, x)\}_{E \in \cS, x \in X}$ and $\{q(\cdot|s,x)\}_{s\in S, x\in X}$. When the transition kernel is in the form $q(s_1|s,x) = \sum_{1\leq j\leq J}  q_j(s_1,s,x)\rho_j(s_1)$, let $\cG$ be the $\sigma$-algebra generated by the mappings $\{q_j(\cdot|s,x)\}_{j\in J, s\in S,x\in X}$. In the context of Example \ref{ex-1}, if one works with some $\cS$-measurable action correspondences and stage payoffs, then the  mappings $\{A_i(\cdot)\}_{i \in I}$, $\{u_i(\cdot, x)\}_{i \in I, x \in X}$, $\{Q(E|\cdot, x)\}_{E \in \cS, x \in X}$ and $\{q(\cdot|s,x)\}_{s\in S, x\in X}$ generate the $\sigma$-algebra $\cS = \cB([0,1])\otimes \cB([0,1])$ (which is already generated by $\{q(\cdot|s,x)\}_{s\in S, x\in X}$, as shown in Claim \ref{claim-1}). On the other hand, The $\sigma$-algebra generated by $\{q_1(\cdot, s), q_2(\cdot, s) \}_{s \in S}$ is $\cG =\cB([0,1]) \otimes \{\emptyset, [0,1]\}$. The condition of a decomposable coarser transition kernel is satisfied in this case because $\cS$ has no $\cG$-atom. In particular, the word ``coarser'' here means that the $\sigma$-algebra $\cG$ induced by $\{q_1(\cdot, s), q_2(\cdot, s) \}_{s \in S}$ is coarser than the $\sigma$-algebra $\cS$ in the primitive of the game, given any non-trivial event.

We now state the main result of this paper.

\begin{thm}\label{thm-D_existence}
Every discounted stochastic game with a (decomposable) coarser transition kernel has a stationary Markov perfect equilibrium.
\end{thm}

\section{Applications}\label{sec-application}

In this section, we shall present some applications. In particular, we introduce in Subsection \ref{subsec-shock} a subclass of stochastic games with a decomposable coarser transition kernel, where the discrete components in the actions and states could directly influence the transition of the random shocks. Games in this subclass are called stochastic games with endogenous shocks.\footnote{As in \cite{Duggan2012}, we also allow the random shocks to enter into the stage payoffs of the players.} As an illustrative application, we shall consider in Subsection \ref{subsec-example} a stochastic version of a dynamic oligopoly model as studied in \cite{MT1988a,MT1988b}.

\subsection{Stochastic games with endogenous shocks}\label{subsec-shock}

Economic models with both discrete and continuous components of actions are common. For example, firms make discrete choices like entry or exit of the market, and continuous choices like quantities and prices of the products. Here we shall consider stochastic games where the discrete choices may play a distinct role. For this purpose, we assume that for each player $i\in I$, her action space has two components in the form $X_i = X_i^d \times X_i^{-}$, where $X_i^d$ is a finite set representing the discrete choices, and $X_i^{-}$ is a compact metric space representing possibly other types of actions. Let $X^d = \prod_{i\in I} X^d_i$ and $X^{-} = \prod_{i\in I} X_i^{-}$.

Since actions in the previous period could be part of the current state, the state space may have discrete and continuous components as well. On the other hand, random shocks form a common feature in various economic models. As a result, we assume that the state space has the form $S = H^d \times H^{-} \times R$. Here $R$ models the random shocks. The components $H^d$ and $H^{-}$ represent respectively a finite set of fundamental parameters and the residual part of fundamental parameters in the states.

As defined in Section~\ref{sec-game}, the state transition in a stochastic game generally depends on the actions and state in the previous period. However, as shown by the counterexample in \cite{LM2015}, a stationary Markov perfect equilibrium in a stochastic game may not exist without restriction on the transition kernel. Indeed the restriction, as considered in \cite{DGMM1994}, \cite{Duggan2012} and \cite{NR1992}, is to assume that the transition of random shocks in the stochastic games  does not depend on the actions and state in the previous period explicitly. For a stochastic game whose state space is in the form $S = H^d \times H^{-} \times R$, our innovation is to allow that the transition of random shocks directly depends on the discrete components $H^d$ and $X^d$ of the state and actions from the previous period.

We shall now describe formally the state space and the law of motion in a stochastic game with endogenous shocks with the rest of parameters and conditions as in Section~\ref{sec-game}.

\begin{enumerate}
  \item The state space can be written as $S = H^d \times H^{-} \times R$ and $\cS = \cH^d \otimes \cH^{-} \otimes \cR$, where $H^d$ is a finite set with its power set $\cH^d$, $H^{-}$ and $R$ are complete separable metric spaces endowed with the Borel $\sigma$-algebras $\cH^{-}$ and $\cR$ respectively. Denote $H = H^d\times H^{-}$ and $\cH = \cH^d \otimes \cH^{-}$.
  \item Recall that $Q: S\times X \times \cS \to [0,1]$ is the transition probability. For $s \in S, x \in X$, let $Q_H(\cdot|s, x)$ be the marginal of $Q(\cdot|s, x)$ on $H$. There is a fixed probability measure $\kappa$ on $(H,\cH)$ such that for all $s$ and $x$, $Q_H(\cdot|s, x)$ is absolutely continuous with respect to $\kappa$ with the corresponding product measurable Radon-Nikodym derivative $\phi(\cdot|s, x)$. For each $s \in S$, $Q_H(\cdot|s, x)$ is norm continuous in $x$.
  \item The distribution of $r' \in R$ in the current period depends on $h'\in H$ in the current  period as well as $h^d$ and $x^d$ in the previous period. In particular, $Q_R \colon H^d\times X^d \times H\times \cR \to [0, 1]$ is a transition probability such that for all $s = (h^d, h^{-}, r)$, $x = (x^d, x^{-})$, and $Z \in \cS$, we have
      $$Q(Z|s, x) = \int_H \int_R \mathbf{1}_Z(h', r') Q_R(\rmd r'|h^d, x^d, h')Q_H(\rmd h'|s, x).\footnote{For any set $A$, the indicator function $\mathbf{1}_{A}$ of $A$ is a function such that $\mathbf{1}_{A}(y)$ is $1$ if $y\in A$ and $0$ otherwise.}
      $$
  \item For any $h^d$, $x^d$ and $h'$, $Q_R(\cdot|h^d, x^d, h')$ is absolutely continuous with respect to an atomless probability measure $\nu$ on $(R, \cR)$ with the corresponding product measurable Radon-Nikodym derivative $\psi(\cdot|h^d, x^d, h')$.
\end{enumerate}

\begin{rmk} \label{rmk-sges}
A noisy stochastic game as considered in \cite{Duggan2012} is the case that $H^d$ and $X^d$ are singletons. When both $H^{-}$ and $X^{-}$ are singletons, the random shocks in the current period could fully depend on the action profile and the fundamental part of the state in the previous period since $Q_R$ could fully depend on $(h^d, x^d)$ in the previous period; see, for example, the stochastic dynamic oligopoly model in the next subsection. Remark \ref{rmk-ext} below considers possible applications in which $H^d$, $X^d$, $H^{-}$ and $X^{-}$ are all non-singletons.\footnote{Given a stochastic game with endogenous shocks $G$, one may define an auxiliary (noisy) stochastic game $G'$ with a new state space $S'$, where $S'$ expands the state space $S$ of $G$ by  including an additional component consisting of $(h^d, x^d)$ from the previous period. The main theorem in \cite{Duggan2012} implies the existence of a stationary Markov Perfect equilibrium in this auxiliary stochastic game $G'$. However, such an equilibrium strategy for $G'$ is not a stationary Markov Perfect equilibrium for $G$ because it depends on $(h^d, x^d)$ from the previous period in the original game (hence not a Markov strategy for $G$).}
\end{rmk}

The following result is a simple corollary of Theorem \ref{thm-D_existence}.

\begin{coro}\label{coro-shock}
A stochastic game with endogenous shocks has a decomposable coarser transition kernel, and hence possesses a stationary Markov perfect equilibrium.
\end{coro}

\begin{proof}
Let $\lambda = \kappa\otimes \nu$, and $\cG = \cH\otimes \{\emptyset, R\}$. Since $\nu$ is atomless, $\cS$ has no $\cG$-atom under $\lambda$. It is clear that for all $s = (h^d, h^{-}, r)$, $x = (x^d, x^{-})$, and $Z \in \cS$, we have
$$
Q(Z|s, x) = \int_{H \times R} \mathbf{1}_Z(h', r') \phi(h'|s,x) \cdot \psi(r'|h^d, x^d, h') \lambda(\rmd (h',r')),\footnote{For clarity and notational simplicity, we use $s$, $(h, r)$, and $(h^d, h^{-}, r)$ interchangably (similarly for their ``prime" versions).}
$$
which means that the corresponding transition kernel
$$q(s'|s, x) = \phi(h'|s,x) \cdot \psi(r'|h^d, x^d, h')$$
for $s' = (h',r')$.

For any fixed ${\tilde x}^d \in X^d$ and ${\tilde h}^d \in H^d$, define an $\cS \otimes \cS \otimes \cB(X)$-measurable
function  $\hat \phi_{\left({\tilde h}^d, {\tilde x}^d\right)}$ on $S \times S \times X$ and an $\cS$-measurable function $\hat \psi_{\left({\tilde h}^d, {\tilde x}^d\right)}$ on $S$ such that for any $s' = (h',r') \in S$, $s=(h^d, h^{-},r) \in S$ and $x = (x^d, x^{-}) \in X$,
\begin{itemize}
  \item $\hat \phi_{\left({\tilde h}^d, {\tilde x}^d\right)}(s',s,x) = \phi(h'|s,x)\cdot \mathbf{1}_{\{({\tilde h}^d, {\tilde x}^d)\}}\left((h^d, x^d)\right)$;

  \item $\hat \psi_{\left({\tilde h}^d, {\tilde x}^d\right)}(s') = \psi(r'|{\tilde h}^d,{\tilde x}^d, h')$.
\end{itemize}
It is clear that $\hat \phi_{\left({\tilde h}^d, {\tilde x}^d \right)}(\cdot,s,x)$ is $\cG$-measurable for any fixed $s$ and $x$.  Then we have
$$q(s'|s, x) = \phi(h'|s,x) \cdot \psi(r'|h',h^d, x^d) = \sum_{{\tilde h}^d \in H^d, {\tilde x}^d \in X^d} \hat\phi_{\left({\tilde h}^d, {\tilde x}^d\right)}(s',s,x)\cdot \hat\psi_{\left({\tilde h}^d, {\tilde x}^d\right)}(s').$$
Since $H^d$ and $X^d$ are both finite, the stochastic game with endogenous shocks has a decomposable coarser transition kernel, and a stationary Markov perfect equilibrium by Theorem \ref{thm-D_existence}.
\end{proof}

\subsection{Stochastic dynamic oligopoly with price competition}\label{subsec-example}

As mentioned in the previous subsection, it is natural to model certain economic situations as stochastic games with endogenous shocks. In this subsection, we shall consider a stochastic analog of a dynamic oligopoly asynchronous choice model as studied in \cite{MT1988a,MT1988b}. We verify that such a stochastic dynamic oligopoly model is actually a stochastic game with endogenous shocks. Corollary~\ref{coro-shock} then allows us to claim the existence of a stationary Markov perfect equilibrium.

Competition between two firms $(i = 1,2)$ takes place in discrete time with an infinite horizon. Each period begins with a set of firms active in the market, a vector of prices from the previous period, and a demand shock.  An active firm will make the price decision while an  inactive firm follows its price in the previous period. The firms' profits depend on the prices of both firms and the demand shock. This model is an extension of the model as considered in \cite{MT1988b}, where firms move alternatively. In contrast, we allow for any possible transition between inactive and active firms, and introduce endogenous demand shocks which is considered desirable in \cite[p.~587]{MT1988b}.

Formally, the set of players is $I = \{1,2\}$.  An element $\theta$ in the set $\Theta = \{\theta_1, \theta_2, \theta_3\}$ indicates which players are active at the beginning of each period. Player~$1$ (resp. player~$2$) is active if $\theta=\theta_1$ (resp. $\theta_2$); both players are active  if $\theta = \theta_3$. That is, instead of focusing on a fixed short commitment for two periods, we allow for random commitments (see \cite{MT1988a} for more discussion). As in the model of \cite[p.~573]{MT1988b}, the price space $P_1 = P_2 = P$ is assumed to be finite, which means that firms cannot set prices in units smaller than, say, a penny. Let $\tilde{P} = P$, and $\tilde{P}^2 = \tilde{P}\times \tilde{P}$ denote the set of prices from the previous period. Let $R$ be a set of demand shocks, which is a closed subset of the Euclidean space $\bR^l$. The state of the market is then summarized by a vector $(\theta, (\tilde{p_1}, \tilde{p_2}), r)$.

A decision of firm $i$ is to propose a price $a_i \in P_i$. At state $(\theta, (\tilde{p_1}, \tilde{p_2}), r)$, the set of feasible actions for firm~$i$ is $P_i$ if $\theta = \theta_i$ or $\theta_3$, and $\{\tilde{p_i}\}$ otherwise. If a firm is active, then it can pick any price in $P_i$. If the firm is inactive, then it must commit to its price from the previous period.

Given the state $s = (\theta, (\tilde{p_1}, \tilde{p_2}), r)$ and action profile $(p_1, p_2)$ in the previous period, the state in the current period, denoted by $(\theta', (\tilde{p_1}', \tilde{p_2}'), r')$, is determined as follows:
\begin{enumerate}
  \item $\theta'$ is determined by $s$ and $(p_1, p_2)$, following a transition probability $\kappa_1(\cdot|s, p_1, p_2)$;
  \item $\tilde{p_i}' = p_i$ for $i = 1,2$, which means that the action profile in the previous period is publicly observed and viewed as part of the state in the current period;
  \item $r'$ is  determined by an atomless transition probability $\mu(\cdot|\theta', p_1, p_2, \theta, \tilde{p_1}, \tilde{p_2})$, which means that demand shocks are directly determined by the prices and firms' relative positions in the previous and current periods.
\end{enumerate}

Suppose that the market demand function $D\colon P \times R \to \bR_+$ and the cost function $c \colon R\to \bR_+$ are both bounded. At the state $s = (\theta, (\tilde{p_1}, \tilde{p_2}), r)$, firm~$i$'s profit is given by
$$u_i(p_1,p_2,r) =
\begin{cases}
(p_i - c(r)) D(p_i,r)            & p_i < p_j;  \\
\frac{(p_i - c(r)) D(p_i,r)}{2}  & p_i = p_j; \\
0                                     & p_i > p_j.
\end{cases}
$$
where $(p_1,p_2)$ is the action profile in the current period. Firm~$i$ discounts the future by a discount factor $0 < \beta_i < 1$.

We shall show that this stochastic dynamic duopoly model can be viewed as a stochastic game with endogenous shocks. For both firms, $X_i^d = P_i$ and $X_i^{-}$ is a set with only one element. Let $H^d = \Theta\times \tilde{P}^2$, $H^{-}$ be a singleton set, and $H = H^d \times H^{-}$. Let $\kappa$ be the counting probability measure on $H$ and
$$\nu = \frac{1}{J}\sum_{(\theta, \theta') \in \Theta^2,  (p_1, p_2)\in P_1\times P_2, (\tilde{p_1},\tilde{p_2}) \in \tilde{P}^2} \mu(\cdot|\theta', p_1, p_2, \theta, \tilde{p_1}, \tilde{p_2}),$$
where $J$ is cardinality of the product space $\Theta^2\times P^4$. Then the marginal measure $Q_H(\cdot|s,p_1,p_2)$ is absolutely continuous with respect to $\kappa$, and $Q_R(\cdot|h',h^d,p_1,p_2) = \mu(\cdot|\theta', p_1, p_2, \theta, \tilde{p_1}, \tilde{p_2})$ is absolutely continuous with respect to $\nu$. Since the space of action profiles is finite, the continuity requirement on the payoff functions and transition kernel is automatically satisfied. Therefore, the following proposition follows from Corollary~\ref{coro-shock}.

\begin{prop} \label{prp-1}
The above dynamic duopoly model has a stationary Markov perfect equilibrium.
\end{prop}

\begin{rmk} \label{rmk-ext}
In the above model, the position of a firm (active or inactive) is randomly determined, and hence is not a choice variable. One can consider a dynamic market model in which the positions of the firms are determined by the endogenous entry/exit decisions; see, for example, \cite{Duggan2012}, \cite{EP1995} and \cite{Hopenhayn1992}. In particular, in the application on firm entry, exit, and investment in \cite{Duggan2012}, each firm needs to make decisions on entry/exit as well as on the production plan. Thus, the action space naturally has two components. The state space can be divided as a product of three parts: the first part $Z$ provides a list of firms which are present or absent in the market in the current period; the second part $K$ is the available capital stock in the current period; and the last part $R$ represents exogenously given random shocks which are i.i.d with a  density with respect to the Lebesgue measure. This was formulated in \cite{Duggan2012} as a noisy stochastic game, which can also be viewed as a stochastic game with endogenous shocks with $H^d$ as a singleton set and $H^{-} = Z\times K$. This model can be extended by letting $H^d =Z$, $H^{-} = K$, and the transition of $R$ depend on firms' positions in the previous and current periods. Such dependence of the random shocks allows for the interpretation that the change in the number of active firms as well as the shift of market positions by some firms\footnote{If $1$ and $0$ represent active or inactive firms respectively, then a shift of market position for the active (inactive) firm means to change its position from $1$ to $0$ ($0$ to $1$).}  do matter for the demand/supply shocks. For example, the decision of a big firm to quit the market is a shock that could significantly distort the expectation of the demand side, while the exit decision of a small firm may not even be noticed. The extended model remains a stochastic game with endogenous shocks, which has a stationary Markov perfect equilibrium.\footnote{As noted in the introduction, our existence results on stochastic games, whose state transition for the shock component explicitly depends on the parameters in the previous stage, cannot be covered by earlier results. These include Corollary \ref{coro-shock}, Proposition \ref{prp-1} as well as the results suggested in Remark~\ref{rmk-ext}.}
\end{rmk}

\section{An Extension}\label{sec-atomic}

In Section~\ref{sec-results}, we assume that the probability measure $\lambda$ is atomless on $(S,\cS)$.
Below, we consider the more general case that $\lambda$ may have atoms. To guarantee the existence of stationary Markov perfect equilibria, we still  assume the condition of decomposable coarser transition kernel, but only on the atomless part.

\begin{enumerate}
  \item There exist disjoint $\cS$-measurable subsets $S_a$ and $S_l$ such that $S_a\cup S_l = S$, $\lambda|_{S_a}$ is the atomless part of $\lambda$ while $\lambda|_{S_l}$ is the purely atomic part of $\lambda$. The subset $S_l$ is countable and each singleton $\{s_l\}$ with $s_l \in S_l$ is $\cS$-measurable with $\lambda(s_l) > 0$.\footnote{This assumption is only for simplicity. It is immediate to extend our result to the case that $S_l$ is a collection of at most countably many atoms in the usual measure-theoretic sense. Note that for a set $\{s\}$ with one element in $S$, we use $\lambda(s)$ to denote the measure of this set instead of $\lambda(\{s\})$.}
  \item For $s_a \in S_a$, the transition kernel $q(s_a|s,x) = \sum_{1\leq j\leq J} q_j(s_a,s,x) \rho_j(s_a)$ for some positive integer $J$, and for each $s \in S$ and $x \in X$,  where $q_j$ is product measurable, and $q_j(\cdot,s,x)$ and $\rho_j$ are nonnegative and integrable on the atomless measure space $(S_a,\cS^{S_a},\lambda|_{S_a})$ for $j=1,\ldots,J$.\footnote{It is clear that for any $E \in \cS$, the transition probability $Q(E|s, x) = \int_{E \cap S_a} q(s_a|s,x) \lambda(\rmd s_a) + \sum_{s_l \in S_l} \mathbf{1}_E (s_l) q(s_l|s,x) \lambda(s_l)$ for any $s \in S$ and $x \in X$.}
\end{enumerate}

\begin{defn}\label{defn-atom}
Let $\cG$ be a sub-$\sigma$-algebra of $\cS^{S_a}$. A discounted stochastic game is said to have a  \textbf{decomposable coarser transition kernel on the atomless part} if   $\cS^{S_a}$ has no $\cG$-atom under $\lambda|_{S_a}$ and  $q_j(\cdot, s,x)$ is $\cG$-measurable on $S_a$ for each $s\in S$ and $x\in X$, $j=1,\ldots,J$.
\end{defn}

The following theorem shows that the equilibrium existence result still holds.

\begin{thm}\label{thm-atom}
Every discounted stochastic game with a decomposable coarser transition kernel on the atomless part has a stationary Markov perfect equilibrium.
\end{thm}

\section{Discussion}\label{sec-lite}

In this section, we shall discuss the relationship between our results and several related results. In particular, we show that our results cover the existence results on correlated equilibria, noisy stochastic games, stochastic games with state-independent transitions, and stochastic games with mixtures of constant transition kernels as special cases. We also explicitly demonstrate why a recent counterexample fails to satisfy our conditions, and discuss the minimality of our conditions.

\paragraph{Correlated equilibria}\

It was proved in \cite{NR1992} that a stationary Markov perfect correlated equilibrium exists in discounted stochastic games in the setup described in our Section~\ref{sec-game}. Ergodic properties of such correlated equilibria were obtained in \cite{DGMM1994} under stronger conditions. They essentially assumed that players can observe the outcome of a public randomization device before making decisions at each stage.\footnote{For detailed discussions on such a public randomization device, or ``sunspot'', see \cite{DGMM1994} and their references.} Thus, the new state space can be regarded as $S'= S\times L$ endowed with the product $\sigma$-algebra $\cS' = \cS\otimes \cB$ and product measure $\lambda' = \lambda\otimes \eta$, where $L$ is the unit interval endowed with the Borel $\sigma$-algebra $\cB$ and Lebesgue measure $\eta$. Denote $\cG' = \cS\otimes \{\emptyset,L\}$.
Given $s',s'_1 \in S'$ and $x\in X$, the new transition kernel $q'(s'_1|s',x) = q(s_1|s,x)$, where $s$ (resp. $s_1$) is the projection of $s'$ (resp. $s'_1$) on $S$ and $q$ is the original transition kernel with the state space $S$. Thus, $q'(\cdot|s',x)$ is measurable with respect to $\cG'$ for any $s'\in S'$ and $x\in X$.
It is obvious from Lemma 2 in \cite{HSS2016} that $\cS'$ has no $\cG'$-atom. Then the condition of coarser transition kernel is satisfied for the extended state space $(S',\cS',\lambda')$,  and  the existence of a stationary Markov perfect equilibrium follows from Theorem~\ref{thm-D_existence}.\footnote{As noted in \cite{NR1992}, a stochastic version of Caratheodory's theorem implies that one can find a stationary Markov correlated equilibrium strategy as a stochastic convex combination of stationary Markov strategies.}
The drawback of this approach is that the ``sunspot'' is irrelevant to the fundamental parameters of the game. Our result shows that it can indeed enter the stage payoff $u$, the correspondence of feasible actions $A$ and the transition probability $Q$.

\paragraph{Stochastic games with finite actions and state-independent transitions}\

In \cite{PS1989}, they studied stochastic games with finite actions and state-independent atomless transitions. Namely, $X_i$ is finite for each $i \in I$ and the transition probability $Q$ does not directly depend on $s$ (to be denoted by $Q(\cdot|x)$). Let $\mbox{card}(X)$ be the cardinality of the finite set $X$ of action profiles. We shall check that such a stochastic game satisfies the condition of decomposable coarser transition kernels.

Let $\lambda$ be the probability measure ${1 \over \mbox{card}(X)} \sum_{x \in X}Q(\cdot|x)$. Then, for each $x$, $Q(\cdot|x)$ is absolutely continuous with respect to $\lambda$, and the corresponding Radon-Nikodym derivative is denoted by $q(\cdot|x)$ (abbreviated as $q_x$). For a fixed $x \in X$, let $\mathbf{1}_{\{x\}}$ be the indicator function of the singleton set $\{x\}$: $\mathbf{1}_{\{x\}}(y) = 1$ if $y = x$, and $0$ otherwise. Then we have
$$q(s'|x) = \sum_{y \in X} \mathbf{1}_{\{x\}} (y)q_y(s').$$
It is obvious that the condition of decomposable coarser transition kernels is satisfied with $\cG = \{\emptyset, S\}$.  Then a stationary Markov perfect equilibrium exists by Theorem~\ref{thm-D_existence}.

\paragraph{Decomposable constant transition kernels on the atomless part}\

Stochastic games with transition probabilities as combinations of finitely many measures on the atomless part were considered in \cite{Nowak2003}. In particular, the structure of the transition probability in \cite{Nowak2003} is as follows.

\begin{enumerate}
  \item $S_2$ is a countable subset of $S$ and $S_1 = S\setminus S_2$, each point in $S_2$ is $\cS$-measurable.
  \item There are atomless nonnegative measures $\mu_j$ concentrated on $S_1$, nonnegative measures $\delta_k$ concentrated on $S_2$, and measurable functions $q_j,b_k: S\times X \to [0,1]$, $1\le j \le J$ and $1\le k \le K$, where $J$ and $K$ are  positive integers. The transition probability $Q(\cdot|s,x) = \delta(\cdot|s,x) + Q'(\cdot|s,x)$ for each $s\in S$ and $x\in X$, where $\delta(\cdot|s,x) = \sum_{1\le k \le K} b_k(s,x) \delta_k(\cdot)$ and $Q'(\cdot|s,x) = \sum_{1\leq j\leq J} q_j(s,x) \mu_j(\cdot)$.
  \item For any $j$ and $k$, $q_j(s,\cdot)$ and $b_k(s,\cdot)$ are continuous on $X$ for any $s\in S$.
\end{enumerate}

We shall show that any stochastic game with the above structure satisfies the condition of decomposable coarser transition kernel on the atomless part.

Without loss of generality, assume that $\mu_j$ and $\delta_k$ are all probability measures. Let $\lambda (E) = \frac{1}{J+K} \big( \sum_{1\le j \le J} \mu_j(E) + \sum_{1\le k \le K} \delta_k (E) \big)$ for any $E\in \cS$. Then $\mu_j$ is absolutely continuous with respect to $\lambda$ and assume that $\rho_j$ is the Radon-Nikodym derivative for $1\le j \le J$.

Given any $s\in S$ and $x\in X$, let
$$q(s'|s,x) = \begin{cases}
\sum_{1\le j \le J} q_j(s,x)\rho_j(s'), & \mbox{if } s' \in S_1; \\
\frac{\delta(s'|s,x)}{\lambda(s')},     & \mbox{if } s' \in S_2 \mbox{ and } \lambda(s') > 0;\\
0,                                      & \mbox{if } s' \in S_2 \mbox{ and } \lambda(s') = 0.
\end{cases}
$$
Then $Q(\cdot|s,x)$ is absolutely continuous with respect to $\lambda$ and $q(\cdot|s,x)$ is the transition kernel.
The condition of a decomposable coarser transition kernel on the atomless part is satisfied with $\cG = \{\emptyset, S_1\}$.  Then a stationary Markov perfect equilibrium exists by Theorem~\ref{thm-atom}.

\paragraph{Noisy stochastic games}\

It was proved in \cite{Duggan2012} that stationary Markov perfect equilibria exist in stochastic games with noise -- a component of the state that is nonatomically distributed and not directly affected by the previous period's state and actions. As indicated in Remark~\ref{rmk-sges}, a noisy stochastic game is a special case of a stochastic game with endogenous shocks such that $H^d$ and $X^d$ are both singletons.

By Corollary~\ref{coro-shock}, a noisy stochastic games has a decomposable coarser transition kernel. Below, we directly show that any noisy stochastic game indeed has a coarser transition kernel. The existence of stationary Markov perfect equilibria in noisy stochastic games thus follows from Theorem~\ref{thm-D_existence}. The proof of the proposition below is left in the appendix.

\begin{prop}\label{prop-noisy}
Every noisy stochastic game has a coarser transition kernel.
\end{prop}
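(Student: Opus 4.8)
The plan is to exhibit an atomless dominating measure $\lambda$ on $(S,\cS)=(H\times R,\ \cH\otimes\cR)$ together with a sub-$\sigma$-algebra $\cG$ satisfying Definition~\ref{defn-coarser}. First I would compute the transition kernel. Combining conditions 2, 4 and 5 and applying Tonelli, for every $Z\in\cS$,
\[
Q(Z|s,a)=\int_H\int_R \mathbf{1}_Z(h',r')\,\beta(r'|h')\,\alpha(h'|s,a)\,\nu(\rmd r')\,\kappa(\rmd h'),
\]
so $Q(\cdot|s,a)$ has density $\alpha(h'|s,a)\beta(r'|h')$ with respect to $\kappa\otimes\nu$. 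The naive choice $\lambda=\kappa\otimes\nu$ is problematic: the kernel then genuinely depends on the noise coordinate $r'$ through $\beta$, and when $\beta(\cdot|h')$ is essentially injective the $\sigma$-algebra generated by the kernel is all of $\cS$, producing $\cG$-atoms.

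The key idea is to absorb $\beta$ into the base measure. I would set $\lambda(\rmd(h',r'))=\beta(r'|h')\,\kappa(\rmd h')\,\nu(\rmd r')=\kappa(\rmd h')\,\nu_{h'}(\rmd r')$. Since $\int_R\beta(r'|h')\,\nu(\rmd r')=\nu_{h'}(R)=1$, this $\lambda$ is a probability measure, and it disintegrates over the $H$-coordinate with conditional measures $\nu_{h'}$, each atomless (being absolutely continuous with respect to the atomless $\nu$); hence $\lambda$ itself is atomless. With respect to this $\lambda$ the density of $Q(\cdot|s,a)$ becomes simply $q((h',r')|s,a)=\alpha(h'|s,a)$, depending only on $h'$. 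I would then take $\cG=\pi_H^{-1}(\cH)=\{B\times R:B\in\cH\}$, the $\sigma$-algebra generated by the $H$-coordinate; by construction $q(\cdot|s,a)$ is $\cG$-measurable for every $(s,a)$, which is the first requirement of Definition~\ref{defn-coarser}. I would also record that the paper's continuity condition on $q$ reduces, after integrating out $r'$, to $\int_H\big|\alpha(h'|s,x^n)-\alpha(h'|s,x^0)\big|\,\kappa(\rmd h')\to 0$, which is exactly the total-variation continuity of condition 3, so the object is a legitimate game in the sense of Section~\ref{sec-game}.

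The main obstacle is the remaining requirement that $\cS$ have no $\cG$-atom. Here I would exploit that $\lambda$ disintegrates as $\kappa(\rmd h')\,\nu_{h'}(\rmd r')$ with atomless fibers $\nu_{h'}$. Given any $D\in\cS$ with $\lambda(D)>0$, I would produce a set $D_0\subseteq D$ that halves $D$ conditionally on $\cG$, i.e. $\lambda\big(D_0\cap(B\times R)\big)=\tfrac12\lambda\big(D\cap(B\times R)\big)$ for all $B\in\cH$; the atomlessness of each $\nu_{h'}$ makes such conditional halving possible, and a measurable-selection argument is needed to assemble the fiberwise halves into a genuine $\cS$-set. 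If some $D_1=D\cap(B\times R)\in\cG^D$ satisfied $\lambda(D_0\triangle D_1)=0$, then intersecting with $B\times R$ would force $\tfrac12\lambda\big(D\cap(B\times R)\big)=\lambda\big(D\cap(B\times R)\big)$, hence $\lambda(D_0)=0$, contradicting $\lambda(D_0)=\tfrac12\lambda(D)>0$. Thus $D$ is not a $\cG$-atom, and since $D$ was arbitrary, $\cS$ has no $\cG$-atom, completing the verification. I expect the delicate point to be the rigorous construction of the conditionally halving set $D_0$ (equivalently, the measurable disintegration of $\lambda$ with atomless conditionals); everything else is a direct computation.
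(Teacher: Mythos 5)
Your proposal is correct and follows essentially the same route as the paper: the same base measure $\lambda(\rmd(h,r))=\beta(r|h)\,\kappa(\rmd h)\,\nu(\rmd r)$ absorbing $\beta$ so that the kernel reduces to $\alpha(h'|s,a)$, the same $\cG=\cH\otimes\{\emptyset,R\}$, and the same verification that no $D$ is a $\cG$-atom via a set $D_0\subseteq D$ halving $D$ fiberwise using atomlessness of the conditionals $\nu_h$. The one step you deferred --- rigorously assembling the fiberwise halves into an $\cS$-measurable set --- is exactly where the paper does its real work, taking a measurable $\phi\colon D\to[0,1]$ generating $\cS^D$ and setting $D_0=\{s\in D\colon f_{g(s)}(\phi(s))\le \tfrac12\}$ with $f_h$ the conditional distribution function of $\phi$ under $\nu_h$ (continuous by atomlessness, so the level $\tfrac12$ is attained exactly), which realizes your sketch without needing a measurable selection theorem.
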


\paragraph{Nonexistence of stationary Markov perfect equilibrium}\

A concrete example of a stochastic game satisfying all the conditions as stated in Section \ref{sec-game} was presented in \cite{LM2015}, which has no stationary Markov perfect equilibrium. Their example will be described in the following.

\begin{enumerate}
\item The set of players is $\{A, B, C, C', D, D', E, F\}$.
\item Player A has the action space $\{U, D\}$, and player $B$ has the action space $\{L, M, R\}$. Players $C, C', D, D'$ have the same action space  $\{0, 1\}$. Players E, F have the action space $\{-1, 1\}$.
\item The state space is $S = [0, 1]$ endowed with the Borel $\sigma$-algebra $\cB$.
\item For any action profile $x$, let
$$h(x) = x_C + x_{C'}  + x_D + x_{D'},$$
where $x_i$ is the action of player~$i$. For each $s \in [0,1]$, let
$$\tilde{Q}(s, x) = (1 - s) \cdot \frac{1}{64} h(x),$$
and $U(s,1)$ be the uniform distribution on $[s,1]$ for $s\in [0,1)$. The transition probability is given by
$$Q(s, x) = \tilde{Q}(s, x)U(s,1) + (1 - \tilde{Q}(s,x)) \delta_1,$$
where $\delta_1$ is the Dirac measure at $1$.
\end{enumerate}

The following proposition shows that the condition of a decomposable coarser transition kernel on the atomless part is violated in this example.\footnote{Proposition~\ref{prop-levy} can also be implied by the nonexistence result in \cite{LM2015} and our Theorem~\ref{thm-atom}. However, the argument in \cite{LM2015} is  deep, and our proof explicitly demonstrates why their example fails to satisfy our sufficient condition in Theorem~\ref{thm-atom}.}

\begin{prop}\label{prop-levy}
The atomless part $\tilde{Q}(s, x)U(s,1)$ of the transition probability in the Example of \cite{LM2015} does not have a decomposable coarser transition kernel.
\end{prop}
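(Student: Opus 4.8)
The plan is to argue by contradiction. Suppose the atomless part admits a decomposable coarser transition kernel: there are a sub-$\sigma$-algebra $\cG$ of $\cS^{S_1}$ with no $\cG$-atom, an integer $J$, nonnegative integrable $\rho_1,\dots,\rho_J$, and kernels $q_j(\cdot,s,x)$ that are $\cG$-measurable in the first variable, with $q(s_1\mid s,x)=\sum_{j=1}^J q_j(s_1,s,x)\rho_j(s_1)$ on $S_1=[0,1)$. First I would compute the kernel explicitly. Writing $c(x)\in\{0,\tfrac12,1\}$ for the weight of $U(s,1)$ in $Q'$ read off from Table~1, the continuous part of $Q(\cdot\mid s,x)$ on $[0,1)$ has Lebesgue density $\alpha c(x)\mathbf 1_{[s,1)}(s_1)$. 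Fixing the profile $x^*$ with $c(x^*)=1$ (players $C,D$ both playing $-1$) and letting $s$ range over $[0,1)$, the decomposition gives $\mathbf 1_{[s,1)}=\sum_{j}\alpha^{-1}q_j(\cdot,s,x^*)\rho_j\in M$ for every $s$, where $M:=\{\sum_{j=1}^J g_j\rho_j: g_j\ \cG\text{-measurable}\}$ is the $\cG$-module generated by $\rho_1,\dots,\rho_J$. Since $M$ is closed under $\cG$-linear combinations, differencing yields $\mathbf 1_{[s,s')}\in M$ for all $s<s'$, hence $\mathbf 1_E\in M$ for every finite union $E$ of subintervals of $[0,1)$.

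The conceptual core is that $M$ has \emph{conditional rank at most $J$}: any $J+1$ elements of $M$ are dependent over the $\cG$-measurable functions. Indeed, if $\psi_0,\dots,\psi_J\in M$, then for a.e. $s_1$ the vector $(\psi_0(s_1),\dots,\psi_J(s_1))$ lies in the range of the $(J+1)\times J$ $\cG$-measurable coefficient matrix, so by measurable selection there is a $\cG$-measurable unit vector field $c(s_1)=(c_0,\dots,c_J)$ in its left kernel with $\sum_i c_i\psi_i=0$ a.e. If the $\psi_i$ are bounded, multiplying by $\psi_{i'}$ and applying $E(\cdot\mid\cG)$ shows the conditional Gram matrix $\Gamma_{ii'}:=E(\psi_i\psi_{i'}\mid\cG)$ satisfies $c^{\mathsf T}\Gamma=0$, so $\det\Gamma=0$ a.e. Thus any $J+1$ bounded elements of $M$ have singular conditional Gram matrix.

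To contradict this I must produce $J+1$ functions, arbitrarily well approximated in $M$, whose conditional Gram matrix is \emph{non}singular on a positive-measure set, and this is precisely where the no-$\cG$-atom hypothesis enters. By the footnote to Lemma~\ref{lem-convexity}, no $\cG$-atom in the stated sense implies $\cG$ carries no atom in the sense of \cite{DE1976}, i.e. $\lambda(\cdot\mid\cG)$ is atomless; the conditional (Lyapunov-type) splitting property then lets me choose pairwise disjoint $A_0,\dots,A_J\in\cS^{S_1}$ with $E(\mathbf 1_{A_i}\mid\cG)=\tfrac1{J+2}$ a.e., for which the conditional Gram matrix of the indicators is $\tfrac1{J+2}I_{J+1}$, of determinant $(J+2)^{-(J+1)}>0$. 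Now approximate each $\mathbf 1_{A_i}$ in $L^2(\lambda)$ by indicators $\psi_i^{(n)}=\mathbf 1_{E_i^{(n)}}\in M$ of finite unions of intervals. Cauchy--Schwarz and $L^1$-continuity of $E(\cdot\mid\cG)$ give $E(\psi_i^{(n)}\psi_{i'}^{(n)}\mid\cG)\to E(\mathbf 1_{A_i}\mathbf 1_{A_{i'}}\mid\cG)$ in $L^1$, so along a subsequence $\det\Gamma^{(n)}\to(J+2)^{-(J+1)}$ a.e. But each $\Gamma^{(n)}$ is singular by the previous paragraph, so $\det\Gamma^{(n)}\equiv0$, forcing $(J+2)^{-(J+1)}=0$, a contradiction. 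Hence no such $\cG,J,q_j,\rho_j$ exist.

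The main obstacle is turning ``$M$ is generated by finitely many $\rho_j$'' into a genuine dimensional contradiction: the threshold indicators $\mathbf 1_{[s,1)}$ are only constrained pointwise and $\cG$-linearly, not in the ordinary finite-dimensional sense, so the finiteness of $J$ must be exploited through the conditional Gram matrix, while the no-$\cG$-atom condition has to be converted—via conditional non-atomicity—into $J+1$ disjoint sets with conditionally non-degenerate indicators lying in the $L^2$-closure of $M$. Making the approximation of these sets by finite unions of intervals interact correctly with the conditional Gram determinant is the delicate point; the explicit form of the kernel and the module membership of interval indicators are routine by comparison.
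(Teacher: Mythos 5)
Your proof is correct, but it takes a genuinely different route from the paper's. The paper fixes the profile where players C and D play $-1$, keeps the unnormalized kernel $\frac{1}{1-s}\mathbf{1}_{[s,1]}(s_1)$, takes $\cG$ to be the minimal (strongly completed) $\sigma$-algebra making all the $q_j(\cdot,s)$ measurable, and argues via support analysis---$q(\cdot|s')>0$ exactly on $[s',1]$---combined with a combinatorial case analysis on the zero sets $D_j=\{\rho_j=0\}$ (selecting a maximal positive-measure intersection $D^{K_0}$) to conclude that $[0,s']\cap D^{K_0}$ lies in $\cG$ modulo null sets for every $s'$, so that $[0,1)$ or $D^{K_0}$ is a $\cG$-atom, contradicting the hypothesis. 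You instead exploit the normalization $\alpha c(x)\mathbf{1}_{[s,1)}$ (which the paper does not use), place all interval indicators in the $\cG$-module $M$ generated by $\rho_1,\dots,\rho_J$, and derive a finite-conditional-rank obstruction: any $J+1$ bounded elements of $M$ have singular conditional Gram matrix, while the no-$\cG$-atom condition supplies $J+1$ disjoint sets with $E(\mathbf{1}_{A_i}|\cG)=\frac{1}{J+2}$ whose indicators are $L^1$-limits of elements of $M$, forcing a nonsingular limit Gram matrix---a contradiction. Your argument is more conceptual, proves somewhat more (no finite module over a relatively atomless $\cG$ can contain a family of indicators generating $\cB$), and entirely sidesteps the paper's delicate case analysis on where the $\rho_j$ vanish, since the Gram computation is insensitive to their zero sets; the paper's proof, by contrast, is self-contained and uses no auxiliary machinery. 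You do invoke two facts without proof, both of which are fillable: the $\cG$-measurable unit left-kernel field $c$ with $c^{\mathsf{T}}G=0$ exists by Kuratowski--Ryll-Nardzewski selection applied to the closed-valued correspondence $s_1\mapsto\{c\colon \|c\|=1,\ c^{\mathsf{T}}G(s_1)=0\}$; and the conditional splitting you attribute to ``conditional Lyapunov'' follows directly from the paper's own Lemma~\ref{lem-convexity}, applied iteratively to $G_k(s)=\{0,1\}$ on $\big(A_0\cup\cdots\cup A_{k-1}\big)^c$ and $\{0\}$ elsewhere, with the convexified selection $f_k=\frac{1}{J+2-k}\mathbf{1}_{(A_0\cup\cdots\cup A_{k-1})^c}$, whose conditional expectation is exactly $\frac{1}{J+2}$; so no appeal to results outside the paper is actually needed. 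One detail worth stating explicitly: the decomposition identity holds $\lambda$-a.e.\ in $s_1$ for each fixed $s$, and your construction uses only countably many values of $s$ (the interval endpoints across all approximations), so the exceptional null sets can be discarded once and for all.
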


The proof will be given in Subsection \ref{sub-levy}. Here we provide some intuition why the decomposable coarser transition kernel condition fails in this example. Suppose that the atomless part $\tilde{Q}(s, x)U(s,1)$ ($s \in [0, 1)$) of the transition probability satisfies the condition with respect to the Lebesgue measure $\eta$ on $[0, 1)$.
The Radon-Nikodym derivative of $U(s,1)$  with respect to $\eta$ on $[0, 1)$ is:
$q(s_1|s) =
\begin{cases}
\frac{1}{1-s} & s_1\in [s,1), \\
0             & s_1\in [0,s).
\end{cases}
$
Given the particular form of $\tilde{Q}(s, x)$, we can claim that for some positive integer $J$,
$q(\cdot|s) = \sum_{1\le j \le J} q_j(\cdot,s) \rho_j(\cdot)$ for any $s\in [0,1)$, where $q_j$ is product measurable, $q_j(\cdot,s)$ and $\rho_j (\cdot)$ are nonnegative and $\eta$-integrable. Moreover, for some sub-$\sigma$-algebra $\cG$ of the Borel $\sigma$-algebra $\tilde{\cS}$ on $[0, 1)$, $q_j(\cdot, s)$ is $\cG$-measurable for each $j, s$, and $\tilde{\cS}$ has no $\cG$-atom.
Consider the simple case that $\rho_j$ is strictly positive on $[0, 1)$ for all $j$. For any fixed $s \in [0, 1)$, let $D_s = \{s_1 \in [0, 1): q_j(s_1,s) = 0, \, \forall j = 1, \ldots, J\}$, which is $\cG$-measurable. If $s_1 \in [0, s)$, then $q(s_1|s) =0$, which implies that $q_j(s_1,s) = 0$ for all  $1\le j \le J$; hence $s_1 \in D_s$. If $s_1 \in [s, 1)$, then $q(s_1|s) ={1 \over 1-s}$, which implies that $q_j(s_1,s) \ne 0$ for some  $1\le j \le J$; hence $s_1 \notin D_s$. Therefore, $D_s = [0, s)$ is in $\cG$. By the arbitrary choice of $s\in [0,1)$, we know that $\cG$ is the same as $\tilde{\cS}$ which is generated by the class of intervals \{$[0, s)\}_{s\in [0,1)}$. It means that $\tilde{\cS}$ has a $\cG$-atom $[0, 1)$, which is a contradiction.

\paragraph{Minimality}\

We have shown the existence of  stationary Markov perfect equilibria in discounted stochastic games by assuming the presence of a (decomposable) coarser transition kernel. This raises the question of whether our condition is minimal and, if so, then in what sense.

As discussed in the introduction, the main difficulty in the existence argument for stochastic games is due to the failure of the convexity of the equilibrium payoff correspondence $P$ of a one-shot auxiliary game as parameterized by state $s$ and  continuation value function $v$. As will be shown in Subsection~\ref{subsec-coarser proof}, the correspondence $R(v)$, which is the collection of  selections from the equilibrium payoff correspondence $P(v, \cdot)$, will live in an infinite-dimensional space if there is a continuum of states. Thus, the desirable closedness and upper hemicontinuity properties would fail even though $P$ has these properties in terms of $v$. To handle such issues, the standard approach is to work with the convex hull $\mbox{co}(R)$. We bypass this imposed convexity restriction by using the result that for any $\cS$-measurable, integrably bounded\footnote{A correspondence $G \colon (\cS,\cS,\lambda) \to \bR^n$ is said to be integrably bounded if there exists some integrable function $\varphi \colon (\cS,\cS,\lambda) \to \bR_+$ such that $\|G(s)\| \le \varphi(s)$ for $\lambda$-almost all $s$, where $\|\cdot\|$ is usual norm on $\bR^n$.} and closed valued correspondence $G$, if $\cS$ has no $\cG$-atom, then $\cI^{(\cS,\cG,\lambda)}_G = \cI^{(\cS,\cG,\lambda)}_{\mbox{co} (G)}$, where $\cI^{(\cS,\cG,\lambda)}_{G}=\{\bE^{\lambda}(g|\cG)\colon g\mbox{ is an } \cS\mbox{-measurable selection of } G \}$  ($\cI^{(\cS,\cG,\lambda)}_{\mbox{co} (G)}$ is defined analogously) and the conditional expectation is taken with respect to $\lambda$.
Moreover, for the condition of a decomposable coarser transition kernel, we assume that the transition kernel can be  divided into finitely many parts.
The following propositions demonstrate the minimality of our condition from a technical point of view.

\begin{prop}\label{prop-atom}
Suppose that $(S,\cS, \lambda)$ has a $\cG$-atom $D$ with $\lambda(D) > 0$. Then there exists a measurable correspondence $G$ from $(S,\cS,\lambda)$ to $\{0,1\}$ such that $\cI^{(\cS,\cG,\lambda)}_G \neq \cI^{(\cS,\cG,\lambda)}_{\mbox{co} (G)}$.
\end{prop}

The key result that we need in the proof of Theorem \ref{thm-D_existence} in Subsection~\ref{subsec-coarser proof} is 
the existence of an $\cS$-measurable selection $v^*$ of a correspondence $G$ 
such that  $\bE^{\lambda}(v^*\rho_j|\cG) = \bE^{\lambda}(v'\rho_j|\cG)$ for each $1\leq j \leq J$, where $v'$ is an $\cS$-measurable selection of $\mbox{co} (G)$, and $\{\rho_j\}_{1\leq j \leq J}$ are the functions as in Definition \ref{defn-coarser} for a decomposable coarser transition kernel. The question is whether a similar result holds if we generalize the condition of a decomposable coarser transition kernel from a finite sum to a countable sum. We will show that this is not possible.

Let $(S,\cS,\lambda)$  be the Lebesgue unit interval $(L,\cB,\eta)$.  Suppose that $\{\varrho_n\}_{n\ \ge 0}$ is a complete orthogonal system in $L_2(S,\cS,\lambda)$ such that $\varrho_n$ takes value in $\{-1,1\}$ and $\int_S \varrho_n \rmd \lambda = 0$ for each $n \ge 1$ and $\varrho_0 \equiv 1$.
Let $\rho_n  = \varrho_n + 1$ for each $n\ge 1$ and $\rho_0 = \varrho_0$.
Let $\{E_n\}_{n\geq 0}$ be a countable measurable partition of $S$, $E_n$ nonempty and $q_n(s) = \mathbf{1}_{E_n}$ for each $n\ge 0$. Suppose that a transition kernel $q$ is decomposed into a countable sum $q(s_1|s,x) = \sum_{n\ge 0} q_n(s)\rho_n(s_1)$. The following proposition shows that the argument for the case that $J$ is finite is not valid for such an extension.\footnote{It is a variant of a well known example of Lyapunov.}

\begin{prop}\label{prop-finite}
Let $G(s) =  \{-1,1\}$ for $s \in S$, and $f$ be the measurable selection of $\mbox{co}(G)$ that takes the constant value $0$. Then, for any sub-$\sigma$-algebra $\cF \subseteq \cS$, there is no $\cS$-measurable selection $g$ of $G$ with  $\bE^{\lambda}(g \rho_n|\cF) = \bE^{\lambda}(f \rho_n|\cF)$ for any $n\ge 0$.
\end{prop}

Thus, our condition is minimal in the sense that if one would like to adopt the ``one-shot game'' approach as used in the literature for obtaining a stationary Markov perfect equilibrium, then it is a tight condition.

\section{Concluding Remarks}\label{sec-conclusion}

We consider stationary Markov perfect equilibria in discounted stochastic games with a general state space. So far, such equilibria have been shown to exist only for several  special classes of stochastic games.  In this paper, the existence of stationary Markov perfect equilibria is proved under some general conditions, which broadens the scope of potential applications of stochastic games. We illustrate such applications via two examples, namely, stochastic games with endogenous shocks and a stochastic dynamic oligopoly model. Our results unify and go beyond various existence results as in \cite{DGMM1994}, \cite{Duggan2012}, \cite{Nowak2003}, \cite{NR1992} and \cite{PS1989}, and also provide some explanation for a recent counterexample which fails to have an equilibrium in stationary strategies.

In the literature, the standard recursive approach for the existence arguments is to work with
a one-shot auxiliary game parameterized by the state and the continuation value function.
We adopt this approach and provide a very simple proof under the condition of ``(decomposable) coarser transition kernels''. The proof is based on a new connection between stochastic games and conditional expectations of correspondences. We demonstrate from a technical point of view that our condition is minimal for the standard ``one-shot game'' approach.

\section{Appendix}\label{sec-appendix}

\subsection{Proof of Claim~\ref{claim-1}}

 Let $\cF$ be the sub-$\sigma$-algebra of $\cS$ that is generated by the collection of functions $\{q(\cdot, s)\}_{s \in S}$. Let $(\hat{h},\hat{r}) = (0,0)$ (resp. $(\tilde{h},\tilde{r}) = (\frac{1}{3}, 0)$), and denote $\hat{q}$ (resp. $\tilde{q}$) as the corresponding value of $q(s_1, \hat{h},\hat{r})$ (resp. $q(s_1, \tilde{h},\tilde{r})$). Then both $\hat{q}$ and $\tilde{q}$ are functions of $s_1$ and $\cF$-measurable. We have the following system of equations:
\begin{eqnarray}
\hat{q} &=& \frac{3}{2}h_1^2 + 2h_1r_1, \label{eq-ex1-1}\\
\tilde{q} &=& h_1^2 + \frac{1}{3}h_1 + 2h_1r_1. \label{eq-ex1-2}
\end{eqnarray}
We can view $h_1$ and $r_1$ as two functions of $(\hat{q}, \tilde{q})$. By subtracting  Equation (\ref{eq-ex1-2}) from Equation (\ref{eq-ex1-1}), we get $\hat{q} - \tilde{q} = \frac{1}{2}h_1^2 - \frac{1}{3}h_1$. By the definition, this equation must have  solutions in terms of $h_1$ for the given value $\hat{q} - \tilde{q}$. Thus, we have $\frac{1}{9} + 2(\hat{q} - \tilde{q}) \ge 0$, and $h_1 =  \frac{1}{3} + \sqrt{\frac{1}{9} + 2(\hat{q} - \tilde{q})}$ or $ \frac{1}{3} - \sqrt{\frac{1}{9} + 2(\hat{q} - \tilde{q})}$, with at least one of them in $[0,1]$. We can denote
$$\hat{\alpha}(\hat{q}, \tilde{q}) =
\begin{cases}
\frac{1}{3} + \sqrt{\frac{1}{9} + 2(\hat{q} - \tilde{q})}, \quad  \mbox{if } \frac{1}{3} + \sqrt{\frac{1}{9} + 2(\hat{q} - \tilde{q})} \in [0,1]; \\
\frac{1}{3} - \sqrt{\frac{1}{9} + 2(\hat{q} - \tilde{q})}, \quad \mbox{otherwise}.
\end{cases}$$
By substituting $h_1 = \hat{\alpha}(\hat{q}, \tilde{q})$ into Equation (\ref{eq-ex1-1}), we can solve $r_1$ as a function of $(\hat{q}, \tilde{q})$, which is denoted by $r_1 = \tilde{\alpha}(\hat{q}, \tilde{q})$.\footnote{By the construction, both $\hat{\alpha}$ and $\tilde{\alpha}$ are obviously Borel measurable functions.}

Let $\pi_h$ and $\pi_r$ be the projection mappings on $S_1$ with $\pi_h (h_1, r_1) = h_1$ and $\pi_h (h_1, r_1) = r_1$ respectively. Because both $\hat{q}$ and $\tilde{q}$ are $\cF$-measurable functions on $S_1$, and $\pi_h = \hat{\alpha}(\hat{q}, \tilde{q})$ and $\pi_r = \tilde{\alpha}(\hat{q}, \tilde{q})$, the mappings $\pi_h$ and $\pi_r$ are also $\cF$-measurable. As a result, both $\cB([0,1]) \otimes \{\emptyset, [0,1]\}$ and $\{\emptyset, [0,1]\} \otimes \cB([0,1])$ are contained in $\cF$. Thus, $\cS \subseteq \cF$. Since $\cF \subseteq \cS$ by the definition of $\cF$, we have $\cF = \cS$.

\subsection{Proof of Theorem~\ref{thm-D_existence}}\label{subsec-coarser proof}

In this subsection, we shall prove Theorem~\ref{thm-D_existence}.

Let $L_1((S,\cS,\lambda),\bR^m)$ and $L_{\infty}((S,\cS,\lambda),\bR^m)$  be the $L_1$ and $L_\infty$ spaces of all $\cS$-measurable mappings from $S$ to $\bR^m$ with the usual norm; that is,
$$L_1((S,\cS,\lambda),\bR^m) = \{f\colon f \mbox{ is } \cS \mbox{-measurable and } \int_S \|f\| \rmd \lambda   < \infty  \},
$$
$$L_{\infty}((S,\cS,\lambda),\bR^m) = \{f\colon f \mbox{ is } \cS \mbox{-measurable and essentially bounded under } \lambda \},
$$
where $\|\cdot\|$ is the usual norm in $\bR^m$.  By the Riesz representation theorem (see Theorem~13.28 of \cite{AB2006}), $L_{\infty}((S,\cS,\lambda),\bR^m)$ can be viewed as the dual space of $L_{1}((S,\cS,\lambda),\bR^m)$. Then $L_{\infty}((S,\cS,\lambda),\bR^m)$ is a locally convex, Hausdorff topological vector space under the weak$^*$ topology. Let $V = \{v \in L_{\infty}((S,\cS,\lambda),\bR^m) \colon \|v\|_\infty\leq C\}$, where $C$ is the upper bound of the stage payoff function $u$ and $\|\cdot \|_\infty$ is the essential sup norm of $L_{\infty}((S,\cS,\lambda),\bR^m)$. Then $V$ is nonempty and convex. Moreover, $V$  is compact under the weak$^*$ topology by Alaoglu's Theorem (see Theorem~6.21 of \cite{AB2006}). Since $\cS$ is countably generated, $L_1((S,\cS,\lambda),\bR^m)$ is separable, which implies that $V$ is metrizable in the weak$^*$ topology (see   Theorem~6.30 of \cite{AB2006}).

Given any $v=(v_1,\cdots, v_m)\in V$ and $s \in S$, we consider the game $\Gamma(v, s)$. The action space for player $i$ is $A_i(s)$. The payoff of player $i$ with the action profile $x\in A(s)$ is given by
\begin{equation} \label{eq-game}
U_i(s,x)(v)=(1-\beta_i)u_i(s,x)+\beta_i\int_S v_i(s_1) Q(\rmd s_1|s,x).
\end{equation}
A mixed strategy of player $i$ is an element in $\cM(A_i(s))$, and a mixed strategy profile is an element in $\bigotimes_{i\in I}\cM(A_i(s))$.
The set of mixed strategy Nash equilibria of the static game $\Gamma(v,s)$, denoted by $N(v,s)$, is a nonempty compact subset of $\bigotimes_{i\in I}\cM(X_i)$ under the weak$^*$ topology due to the Fan-Glicksberg Theorem (see \cite{Glicksberg1952} and Corollary~17.55 in \cite{AB2006}). Let $P(v,s)$ be the set of payoff vectors  induced by the Nash equilibria in $N(v,s)$, and $\mbox{co}(P)$ the convex hull of $P$. Then $\mbox{co}(P)$ is a correspondence from $V\times S$ to $\bR^m$. Let $R(v)$ (resp. $\mbox{co}(R(v))$) be the set of $\lambda$-equivalence classes of $\cS$-measurable selections of $P(v,\cdot)$ (resp. $\mbox{co}(P(v,\cdot))$) for each $v\in V$.

Following the arguments in Lemmas~6 and 7 in \cite{NR1992} (see also \cite{MP1987}),\footnote{In \cite{NR1992}, a slightly stronger condition was imposed on the transition probability $Q$ that the mapping $q(\cdot|s,x)$ satisfies the $L_1$ continuity condition in $x$ for all $s \in S$. Their arguments on the convexity, compactness and upper hemicontinuity properties still hold in our setting.} for each $v\in V$,  $P(v,\cdot)$ (abbreviated as $P_v(\cdot)$) is $\cS$-measurable and compact valued, and $\mbox{co}(R(v))$ is nonempty, convex, weak$^*$ compact valued and upper hemicontinuous. Then the correspondence $\mbox{co}(R): V\to V$ maps the nonempty, convex, weak$^*$ compact set $V$ (a subset of a locally convex Hausdorff topological vector space) to nonempty, convex subsets of $V$, and it has a closed graph in the weak$^*$ topology. By the classical Fan-Glicksberg Fixed Point Theorem, there is a fixed point  $v' \in V$ such that $v' \in \mbox{co}(R) (v')$. That is, $v'$ is an $\cS$-measurable selection of $\mbox{co}(P(v',\cdot))$.

Recall that a correspondence $G \colon S \to \bR^n$ is said to be integrably bounded if there exists some integrable function $\varphi$ such that $\|G(s)\| \le \varphi(s)$ for $\lambda$-almost all $s$. In addition, for any integrably bounded correspondence $G$ from $S$ to $\bR^m$, $\cI^{(\cS,\cG,\lambda)}_{G}=\{\bE^{\lambda}(g|\cG)\colon g\mbox{ is an } \cS\mbox{-measurable selection of } G \}$, where the conditional expectation is taken with respect to $\lambda$.

The following lemma is from \cite[Theorem 1.2]{DE1976}.

\begin{lem}\label{lem-convexity}
If  $\cS$ has no $\cG$-atom,\footnote{In \cite{DE1976}, a set $D\in \cS$ is said to be a $\cG$-atom if $\lambda(D)>0$ and given any  $D_0 \in \cS^D$,  $\lambda \big(s\in S: 0<\lambda(D_0\mid\cG) (s) <\lambda(D\mid\cG) (s) \big) =0 $. The conditions that $\cS$ has no $\cG$-atom as in \cite{DE1976} as well as in our paper and the book \cite{Jacobs1978}
are equivalent; see Lemma~2 in \cite{HSS2016}.} then for any  $\cS$-measurable,\footnote{In \cite{DE1976}, the correspondence $G$ is said to be measurable if for any $x \in \bR^m$, the function $d(x, G(s))$ is measurable, where $d$ is he usual metric in the Euclidean space. Their notion of measurability of a correspondence coincides with our definition of the measurability of a correspondence, see Theorem~18.5 in \cite{AB2006}.} $\lambda$-integrably bounded, closed valued correspondence $G$,  $\cI^{(\cS,\cG,\lambda)}_G = \cI^{(\cS,\cG,\lambda)}_{\mbox{co} (G)}$.
\end{lem}

Now we are ready to prove Theorem~\ref{thm-D_existence}.

\begin{proof}
Given $v'$, let
$$H(s) = \{(a, a \cdot \rho_1(s), \ldots, a \cdot \rho_J(s)) \colon  a \in P_{v'}(s) \},$$
and $\mbox{co}(H(s))$ the convex hull of $H(s)$ for each $s\in S$.
It is clear that $H$ is $\cS$-measurable, $\lambda$-integrably bounded and closed valued. Then $\cI^{(\cS,\cG,\lambda)}_{H} = \cI^{(\cS,\cG,\lambda)}_{\mbox{co}(H)}$ by Lemma \ref{lem-convexity}, which implies that there exists an $\cS$-measurable selection $v^*$ of $P_{v'}$ such that  $\bE^{\lambda}(v^*\rho_j|\cG) = \bE^{\lambda}(v'\rho_j|\cG)$ for each $1\leq j \leq J$.
For each $i\in I$,  $s\in S$ and $x\in X$, we have
\begin{align*}
\int_S v^*_i(s_1) Q(\rmd s_1|s,x)
& = \sum_{1\leq j \leq J} \int_S v^*_i(s_1) q_j(s_1,s,x) \rho_j(s_1) \lambda(\rmd s_1) \\
& = \sum_{1\leq j \leq J} \int_S \bE^{\lambda}(v^*_i \rho_j q_j(\cdot,s,x)|\cG)(s_1) \lambda(\rmd s_1)\\
& = \sum_{1\leq j \leq J} \int_S \bE^{\lambda}(v^*_i \rho_j|\cG)(s_1) q_j(s_1,s,x) \lambda(\rmd s_1)\\
& = \sum_{1\leq j \leq J} \int_S \bE^{\lambda}(v'_i \rho_j|\cG)(s_1) q_j(s_1,s,x) \lambda(\rmd s_1)\\
& = \sum_{1\leq j \leq J} \int_S \bE^{\lambda}(v'_i \rho_j q_j(\cdot,s,x)|\cG)(s_1) \lambda(\rmd s_1)\\
& = \sum_{1\leq j \leq J} \int_S v'_i(s_1) q_j(s_1,s,x) \rho_j(s_1) \lambda(\rmd s_1) \\
& = \int_S v'_i(s_1) Q(\rmd s_1|s,x).
\end{align*}
By Equation~(\ref{eq-game}),  $\Gamma(v^*,s) = \Gamma(v',s)$ for any $s\in S$, and hence $P(v^*,s) = P(v',s)$. Thus, $v^*$ is an $\cS$-measurable selection of $P_{v^*}$.

By the definition of $P_{v^*}$, these exists an $\cS$-measurable mapping $f^*$ from $S$ to $\bigotimes_{i\in I}\cM(X_i)$ such that $f^*(s)$ is a mixed strategy Nash equilibrium of the game $\Gamma(v^*,s)$ and $v^*(s)$ is the corresponding equilibrium payoff for each $s\in S$.\footnote{Note that $v^*$ is indeed the corresponding equilibrium payoff for $\lambda$-almost all $s\in S$. However, one can modify $v^*$ on a null set such that the claim holds for all $s\in S$; see, for example, \cite{NR1992}.}
It is clear that Equations~(\ref{eq-v_i}) and (\ref{eq-sge}) hold for $v^*$ and $f^*$, which implies that $f^*$ is a stationary Markov perfect equilibrium.
\end{proof}

\subsection{Proof of Theorem~\ref{thm-atom}}

Let  $V_a$ be the set of $\lambda$-equivalence classes of $\cS$-measurable mappings from $S_a$ to $\bR^m$ bounded by $C$.
For each $i\in I$, let $F_i$ be the set of all $f_i \colon S_l \to \cM(X_i)$ such that $f_i(s) \big( A_i(s) \big) = 1$ for all $s\in S_l$, $F=\prod_{i\in I}F_i$. Let $V_l$ be the set of mappings from $S_l$ to $\bR^m$ bounded by $C$, which is endowed with the supremum metric and hence a complete metric space.

Given $s\in S$, $v^a \in V_a$ and $v^l \in V_l$, consider the game $\Gamma(v^a, v^l, s)$. The action space for player $i$ is $A_i(s)$. The payoff of player $i$ with the action profile $x\in A(s)$ is given by
\begin{eqnarray} \label{eq-Phi_i}
\Phi_i(s,x,v^a,v^l)
& = (1-\beta_i) u_i(s,x) + \beta_i \sum_{1\leq j\leq J} \int_{S_a} v^a_i(s_a) q_j(s_a|s,x) \rho_j(s_a) \lambda(\rmd s_a) \nonumber \\
& + \beta_i \sum_{s_l \in S_l} v^l_i(s_l) q(s_l|s,x)\lambda(s_l).
\end{eqnarray}
The set of mixed-strategy Nash equilibria in the game $\Gamma(v^a, v^l, s)$ is denoted as $N(v^a, v^l, s)$. Let $P(v^a, v^l, s)$ be the set of payoff vectors induced by the Nash equilibria in $N(v^a, v^l,s)$, and $\mbox{co}(P)$ the convex hull of $P$.

Given $v^a\in V_a$, $f\in F$, define a mapping $\Pi$ from $V_l$ to $V_l$ such that for each $i\in I$, $v^l \in V_l$ and $s_l\in S_l$,
\begin{equation} \label{eq-Pi_i}
\Pi_i(v^a, f_{-i})(v^l)(s_l) = \max_{\phi_i \in F_i} \int_{X_{-i}} \int_{X_i} \Phi_i(s_l, x_i, x_{-i}, v^a, v^l) \phi_i(\rmd x_{i}|s_l) f_{-i}(\rmd x_{-i}|s_l).
\end{equation}

Let $\beta = \max\{\beta_i \colon i\in I\}$.
Then for any $v^a \in V_a$,  $v^l, \bar{v}^l \in V_l$, $x\in X$ and $s\in S_l$,
\begin{align*}
& \qquad \big| \Phi_i(s, x, v^a, v^l)  - \Phi_i(s,x,v^a,\bar{v}^l)   \big|  \leq \beta_i \sum_{s_l \in S_l} \big| v^l_i(s_l) - \bar{v}^l_i(s_l) \big| q(s_l|s,x)\lambda(s_l) \\
& \leq \beta_i \sup_{s_l\in S_l} \big| v^l_i(s_l) - \bar{v}^l_i(s_l) \big| \leq \beta \sup_{s_l \in S_l} \big| v^l_i(s_l) - \bar{v}^l_i(s_l) \big|.
\end{align*}
Thus, $\Pi$ is a $\beta$-contraction mapping. There is a unique $\bar v^{l} \in V_l$ such that $\Pi_i( v^a, f_{-i})(\bar v^{l})(s_l)  = \bar v_i^{l}(s_l)$ for each $i\in I$ and $s_l\in S_l$. Let $W(v^a, f)$ be the set of all $\phi \in F$ such that for each $i\in I$ and $s_l\in S_l$,
\begin{equation} \label{eq-bar-v_i^2}
\bar v_i^{l}(s_l) = \int_{X_{-i}} \int_{X_i} \Phi_i(s_l, x_i, x_{-i}, v^{a}, \bar v^{l}) \phi_i(\rmd x_{i}|s_l) f_{-i}(\rmd x_{-i}|s_l).
\end{equation}

Let $\bar v^{l}$ be the function on $S_l$ generated  by $v^a$ and $f$ as above, and $R(v^a,f)$ the set of $\lambda$-equivalence classes of $\cS$-measurable selections of $P(v^{a}, \bar v^l, \cdot)$ restricted to $S_a$. Then, the convex hull $\mbox{co}(R(v^a,f))$ is the set of $\lambda$-equivalence classes of $\cS$-measurable selections of $\mbox{co}\big(P(v^{a}, \bar v^l, \cdot)\big)$ restricted to $S_a$. Denote $\Psi(v^a, f) = \mbox{co}(R(v^a,f)) \times W(v^a, f)$ for each $v^a \in V_a$ and $f\in F$.

As shown in \cite{Nowak2003}, $\Psi$ is nonempty, convex, compact valued and upper hemicontinuous. By  Fan-Glicksberg's Fixed Point Theorem, $\Psi$ has a fixed point $(v^{a'}, f^{l'}) \in V_a\times F$.  Let $v^{l'}$ be the mapping from $S_l$ to $\bR^m$ that is generated  by $v^{a'}$ and $f^{l'}$ through the $\beta$-contraction mapping $\Pi$ as above. Then $v^{a'}$ is an $\cS$-measurable selection of $\mbox{co}\big(P(v^{a'}, v^{l'}, \cdot)\big)$ restricted to $S_a$; and furthermore
we have for each $i\in I$ and $s_l\in S_l$,
\begin{equation} \label{eq-bar-v_i^{l'}}
v_i^{l'}(s_l) = \int_{X_{-i}} \int_{X_i} \Phi_i(s_l, x_i, x_{-i}, v^{a'},  v^{l'}) f^{l'}_i(\rmd x_{i}|s_l) f^{l'}_{-i}(\rmd x_{-i}|s_l),
\end{equation}
\begin{equation} \label{eq-new-Pi_i}
\Pi_i(v^{a'}, f^{l'}_{-i})(v^{l'})(s_l)  =  v_i^{l'}(s_l).
\end{equation}

Following the same argument as in the proof of Theorem~\ref{thm-D_existence}, there exists an $\cS$-measurable selection $v^{a*}$ of $P_{(v^{a'}, v^{l'})}$ such that  $\mathbb{E}(v^{a*}\rho_j|\cG) = \mathbb{E}(v^{a'}\rho_j|\cG)$ for each $1\leq j \leq J$, where  the conditional expectation is taken on $(S_a, \cS^{S_a}, \lambda^{S_a})$ with $\lambda^{S_a}$ the normalized probability measure on $(S_a, \cS^{S_a})$. For any $s \in S$ and $x\in A(s)$, $\Phi_i(s,x,v^{a'}, v^{l'}) = \Phi_i(s,x,v^{a*}, v^{l'})$,  $\Gamma(v^{a'}, v^{l'}, s) = \Gamma(v^{a*}, v^{l'}, s)$, and therefore $P(v^{a'}, v^{l'}, s) = P(v^{a*}, v^{l'}, s)$. Thus,  $v^{a*}$ is an $\cS$-measurable selection of $P_{(v^{a*}, v^{l'})}$, and
 there exists an $\cS$-measurable  mapping  $f^{a*} \colon S_a \to \bigotimes_{i\in I}\cM(X_i)$  such that $f^{a*}(s)$ is a mixed-strategy Nash  equilibrium of the game $\Gamma(v^{a*}, v^{l'},s)$ and $v^{a*}(s)$ the corresponding equilibrium payoff.

Let $v^*(s)$ be $v^{a*}(s)$ for $s \in S_a$ and $ v^{l'}(s)$ for $s \in S_l$, and $f^*(s)$ be $f^{a*}(s)$ for $s \in S_a$ and $ f^{l'}(s)$ for $s \in S_l$. For $s_a \in S_a$, since $v^{a*}$ is a measurable selection of $P_{(v^{a*}, v^{l'})}$ on $S_a$, the equilibrium property of $f^{a*}(s_a)$ then implies that Equations~(\ref{eq-v_i}) and (\ref{eq-sge}) hold for $v^*$ and $f^*$. Next, for $s_l \in S_l$,  the identity $\Phi_i(s_l,x,v^{a'}, v^{l'}) = \Phi_i(s_l,x,v^{a*}, v^{l'})$ implies that Equations~(\ref{eq-bar-v_i^{l'}}) and (\ref{eq-new-Pi_i}) still hold  when $v^{a'}$ is replaced by $v^{a*}$, which means that Equations~(\ref{eq-v_i}) and (\ref{eq-sge}) hold for $v^*$ and $f^*$. Therefore, $f^*$ is a stationary Markov perfect equilibrium.

\subsection{Proof of Proposition~\ref{prop-noisy}}

In this subsection, we shall follow the notations in Subsection~\ref{subsec-shock}. As discussed in Remark~\ref{rmk-sges}, a noisy stochastic game is a stochastic game with endogenous shocks in which $H^d$ and $X^d$ are both singletons. As a result, we can slightly abuse the notations by viewing $Q_R$ to be a mapping from $H \times \cR$ to $[0, 1]$, and its Radon-Nikodym derivative $\psi$ to be a mapping defined on $H \times R$. For simplicity, we denote $\nu_h(\cdot) = Q_R(\cdot|h)$.

Let $\lambda(Z) = \int_H \int_R  \mathbf{1}_Z(h, r) \psi(r|h) \nu(\rmd r) \kappa(\rmd h)$ for all $Z \in  S$, and $\cG = \cH\otimes \{\emptyset, R\}$. Recall that $\phi(\cdot |s,x)$ is the Radon-Nikodym derivative of $Q_H(\cdot|s,x)$ with respect to $\kappa$. For each $(s,x)$,  $\phi(\cdot |s,x)$ is a mapping which does not depend on $r$, and hence is $\cG$-measurable. We need to show that $\cS$ has no $\cG$-atom under $\lambda$.

Fix any Borel subset $D\subseteq S$ with $ \lambda(D)  > 0$. There is a measurable mapping $\alpha$ from $(D,\cS^D)$ to $(L,\cB)$ such that $\alpha$ can generate the $\sigma$-algebra $\cS^D$, where $L$ is the unit interval endowed with the Borel $\sigma$-algebra $\cB$. Let $g(h,r) = h$ for each $(h,r) \in D$,  $D_h = \{r \colon (h,r) \in D \}$ and $H_D = \{h\in H: \nu_h (D_h) > 0\}$.

Denote $g_h(\cdot)=g(h,\cdot)$ and $\alpha_h(\cdot) = \alpha(h,\cdot)$ for each $h \in H_D$.  Define a mapping $f\colon H_D \times L \to [0,1]$ as follow: $f(h,l) = \frac{\nu_h \big( \alpha_h^{-1}([0,l])\big)}{\nu_h(D_h)}$.
Similarly, denote $f_h(\cdot)=f(h,\cdot)$ for each $h\in H_D$. For $\kappa$-almost all $h\in H_D$, the atomlessness of $\nu_h$ implies  $\nu_h\circ\alpha_h^{-1}(\{l\})=0$ for all $l\in L$. Thus, the distribution function $f_h(\cdot)$ is continuous on $L$ for $\kappa$-almost all $h \in H_D$.

Let $\gamma(s)=f(g(s), \alpha(s))$ for each $s \in D$, and  $D_0 =\gamma^{-1}([0,\frac{1}{2}])$, which is a subset  of $D$.
For $h\in H_D$, let $l_h$ be $\max\{l\in L\colon f_h(l) \le \frac{1}{2}\}$ if $f_h$ is continuous, and $0$ otherwise. When $f_h$ is continuous, $f_h(l_h) = 1/2$.
For any $E\in \cH$, let $D_1 = (E\times R)\cap D$, and $E_1 = E\cap H_D$.
If $\lambda(D_1) = 0$, then
\begin{align*}
& \qquad \lambda(D_0\setminus D_1) = \lambda(D_0) = \int_{H_D} \nu_h \circ \alpha_h^{-1}\circ f_h^{-1} \big( [0,\frac{1}{2}] \big) \kappa(\rmd h) \\
& = \int_{H_D} \nu_h \big( \alpha_h^{-1}([0,l_h])\big) \kappa(\rmd h) = \int_{H_D} f(h,l_h) \nu_h(D_h) \kappa(\rmd h)  \\
& = \frac{1}{2} \int_{H_D} \nu_h(D_h) \kappa(\rmd h) = \frac{1}{2}\lambda(D)  > 0.
\end{align*}
If $\lambda(D_1) > 0$, then
\begin{align*}
& \quad \lambda(D_1\setminus D_0) = \int_{E_1} \int_R  \mathbf{1}_{D\setminus D_0}(h, r) \nu_h(\rmd r) \kappa(\rmd h) = \int_{E_1} \nu_h \circ \alpha_h^{-1}\circ f_h^{-1} \big( (\frac{1}{2},1] \big) \kappa(\rmd h) \\
& =  \int_{E_1} \nu_h \circ \alpha_h^{-1}\circ f_h^{-1} \big( [0, 1] \setminus [0,\frac{1}{2}] \big) \kappa(\rmd h) = \frac{1}{2} \int_{E_1} \nu_h(D_h) \kappa(\rmd h) = \frac{1}{2} \lambda(D_1)  > 0.
\end{align*}
Hence, $D$ is not a $\cG$-atom. Therefore, $\cS$ has no $\cG$-atom and the condition of coarser transition kernel is satisfied.

\subsection{Proof of Proposition~\ref{prop-levy}}\label{sub-levy}

Suppose that the atomless part of the state transition satisfies the decomposable coarser transition kernel condition with respect to some probability measure $\lambda$ on $(S, \cS)$. Given the form of the state transition $Q(s, x)$, the atomless part of $\lambda$ concentrates on $S_a = [0, 1)$ while $\lambda$ has an atom at $S_l = \{1\}$. For simplicity, we replace the notation $(S_a,\cS^{S_a},\lambda|_{S_a})$ as used in Section \ref{sec-atomic}.
Let $\tilde{S}$ be $S_a = [0, 1)$, $\tilde{\lambda}$ the restriction $\lambda|_{\tilde{S}}$, and $\tilde{\cS}$ the Borel $\sigma$-algebra on $\tilde{S} = [0, 1)$. For some positive integer $J$, the Radon-Nikodym derivative $\tilde q(\tilde{s}|s, x)$ ($\tilde{s} \in \tilde{S}$) of (the atomless part)  $\tilde{Q}(s, x)U(s,1)$ with respect to $\lambda$ (and $\tilde{\lambda}$ as well) can be expressed as $\sum_{1\le j \le J} \tilde{q}_j(\tilde{s},s, x) \rho_j(\tilde{s})$ for any $s\in S$ and $x \in X$, where $\tilde q_j$ is product measurable, $\tilde{q}_j(\cdot, s, x)$ and $\rho_j (\cdot)$ are nonnegative and $\tilde{\lambda}$-integrable. Moreover, for some sub-$\sigma$-algebra $\cG$ of $\tilde{\cS}$ on $\tilde{S} =[0, 1)$, $\tilde q_j(\cdot,s, x)$ is $\cG$-measurable for each $j, s, x$, and $\tilde{\cS}$ has no $\cG$-atom under $\tilde{\lambda}$.

Fix any $s \in [0, 1)$, and an action profile $x^0$ with players $C$, $C'$, $D$ and $D'$ playing the strategy $1$; then $\tilde{Q}(s, x^0) = \frac{(1 - s)}{16}$. It follows form the above paragraph that
$U(s,1)$ is absolutely continuous with respect to $\tilde{\lambda}$ with the corresponding Radon-Nikodym derivative $q(\tilde{s}|s)$ ($\tilde{s} \in [0, 1)$) to be $\sum_{1\le j \le J} \frac{16}{(1 - s)} \tilde{q}_j(\tilde{s},s, x^0) \rho_j(\tilde{s})$. Denote $\frac{16}{(1 - s)} \tilde{q}_j(\tilde{s},s, x^0)$ by $q_j (\tilde{s}, s)$. Then, we have $q(\tilde{s}|s) = \sum_{1\le j \le J} q_j(\tilde{s},s) \rho_j(\tilde{s})$ for any $\tilde{s}\in [0,1)$, where $q_j (\cdot, s)$ is $\cG$-measurable.

It follows from the previous paragraph that the Lebesgue measure $\eta = U(0,1)$ is absolutely continuous with respect to $\tilde{\lambda}$ with the corresponding Radon-Nikodym derivative $q(\cdot|0)$ (to be denoted by  $\bar{q}(\cdot)$ for simplicity).
Let $\tilde{D} = \{\tilde{s} \in [0,1) \colon \bar{q}(\tilde{s}) > 0\}$. Then $\tilde{\lambda}(\tilde{D}) > 0$ and $\eta(\tilde{D}^c) = \int_{\tilde{D}^c} \bar{q}(\tilde{s}) \tilde{\lambda}(\rmd \tilde{s}) = 0$, where $\tilde{D}^c$ is the complement of the set $\tilde{D}$ in $[0, 1)$.
The Radon-Nikodym derivative of $U(s,1)$  with respect to the Lebesgue measure $\eta$ on $[0, 1)$ is:
$$\hat{q}(\tilde{s}|s) =
\begin{cases}
\frac{1}{1-s} & \tilde{s}\in [s,1), \\
0             & \tilde{s}\in [0,s).
\end{cases}
$$
Hence, the Radon-Nikodym derivative $q(\cdot|s)$ of $U(s,1)$ with respect to $\tilde{\lambda}$ can be expressed as $\hat{q}(\cdot|s) \bar{q}(\cdot)$. Thus, we have
$q(\tilde{s}|s)= \hat{q}(\tilde{s}|s) \bar{q}(\tilde{s})= \sum_{1\le j \le J} q_j(\tilde{s},s) \rho_j(\tilde{s})$ for any $\tilde{s}\in [0,1)$.

Denote $D_j = \{\tilde{s}\in \tilde{D} \colon \rho_j(\tilde{s}) = 0\}$ for  $1\le j \le J$. Since $\bar{q}(\tilde{s}) > 0$ for all $\tilde{s}\in \tilde{D}$ and $\hat{q}(\tilde{s}|s) > 0$ for $0\le s \le \tilde{s} < 1$, we must have $\cap_{1\le j\le J} D_j = \emptyset$,  and hence $\tilde{\lambda} \big( \cap_{1\le j\le J} D_j \big) = 0$.

First suppose that $\tilde{\lambda}(D_j) = 0$ for all $j$. Let $\bar D =  \cup_{1\le j\le J} D_j$; then $\tilde{\lambda}(\bar D) =0$. Fix $s'\in [0,1)$. Let $E_j = \{\tilde{s}\in \tilde{D}\colon q_j(\tilde{s},s') = 0\}$ and $E_0 = \cap_{1\le j\le J} E_j$. Then $E_j \in \cG^{\tilde{D}}$ for $1\le j \le J$, and hence $E_0 \in \cG^{\tilde{D}}$. For any $\tilde{s}\in [s',1) \cap \tilde{D}$, since $q(\tilde{s}|s') = \hat{q}(\tilde{s}|s') \bar{q}(\tilde{s})> 0$, there exists  $1\le j \le J$ such that $q_j(\tilde{s}|s') > 0$, which means that $\tilde{s} \notin E_j$ and $\tilde{s} \notin E_0$. Hence, $E_0 \subseteq [0, s') \cap \tilde{D}$.  For any $\tilde{s} \in \left( ([0,s') \cap \tilde{D}) \setminus \bar D \right)$, we have $q(\tilde{s}|s') = 0$ and $\rho_j(\tilde{s}) > 0$ for each $1\le j \le J$, which implies that $q_j(\tilde{s}|s') = 0$ for each $1\le j \le J$, and $\tilde{s} \in E_0$. That is, $\left( ([0,s') \cap \tilde{D}) \setminus \bar D \right) \subseteq E_0$. Hence, $\tilde{\lambda}(E_0 \triangle ([0,s')\cap \tilde{D})) =0$. Therefore, $[0,s')\cap \tilde{D} \in \cG^{\tilde{D}}$ for all $s'\in [0,1)$. Since the class of intervals $\{[0,s')\}_{s'\in [0,1)}$ generates the Borel $\sigma$-algebra on $[0, 1)$, we obtain that $\cG^{\tilde{D}}$ coincides with ${\tilde{\cS}}^{\tilde{D}}$ under $\tilde{\lambda}$. Thus,  $\tilde{\cS}$ has a $\cG$-atom $\tilde{D}$ under $\tilde{\lambda}$. This is a contradiction.

Next suppose that $\tilde{\lambda}(D_j) = 0$ does not hold for all $j$. Then there exists a set, say $D_1$, such that $\tilde{\lambda}(D_1) > 0$. Let $Z = \{K\subseteq \{1,\ldots, J\}\colon 1\in K, \tilde{\lambda}(D^K) > 0\}$, where $D^K = \cap_{j \in K} D_j$. Hence, $\{1\} \in Z$,   $Z$ is finite and nonempty. Let $K_0$ be the element in $Z$ containing most integers; that is, $\big|K_0 \big| \ge \big| K\big|$  for any $K\in Z$, where $\big| K\big|$ is the cardinality of $K$. By the definition of $K_0$, $\tilde{\lambda} \left( D^{K_0} \right) >0$.
Let $K_0^c =  \{1,\ldots, J\}\setminus K_0$.  Then $K_0^c$ is nonempty since $\tilde{\lambda}\big( \cap_{1\le j\le J} D_j \big) = 0$. In addition, $\tilde{\lambda}\big( D^{K_0} \cap D_j \big) = 0$ for any $j \in K_0^c$. Otherwise, $\tilde{\lambda}\big( D^{K_0} \cap D_j \big) > 0$ for some $j \in K_0^c$ and hence the set $K_0\cup \{j\}$ is in $Z$, which contradicts the choice of $K_0$.   Let $\hat D = \cup_{k\in K_0^c} \big( D^{K_0} \cap D_k \big)$; then $\tilde{\lambda}(\hat D) =0$.  For all $\tilde{s}\in D^{K_0}$, $q(\tilde{s}|s) = \sum_{k\in K_0^c}q_k(\tilde{s},s) \rho_k(\tilde{s})$ for all $s\in [0,1)$.

Fix $s'\in [0,1)$. Let $E_k = \{\tilde{s}\in \tilde{D}: q_k(\tilde{s},s') = 0\}$ and $E_{K_0^c} = \cap_{k\in K_0^c} E_k$. Then $E_k \in \cG^{\tilde{D}}$ for any $k$ and hence $E_{K_0^c} \in \cG^{\tilde{D}}$.  For any $\tilde{s}\in [s',1) \cap \tilde{D}$, since $q(\tilde{s}|s') > 0$, there exists $k \in {K_0^c}$ such that $q_k(\tilde{s}, s') > 0$, which means that $\tilde{s} \notin E_k$ and $\tilde{s} \notin E_{K_0^c}$. Hence, $E_{K_0^c} \subseteq [0, s') \cap \tilde{D}$, and $E_{K_0^c} \cap D^{K_0} \subseteq [0, s') \cap D^{K_0}$.  Now, for any $\tilde{s} \in \left( \left( [0,s') \cap D^{K_0} \right) \setminus \hat D \right)$, we have $q(\tilde{s}|s') = 0$, and $\rho_k(\tilde{s}) > 0$ for each $k\in K_0^c$, which implies that $q_k(\tilde{s},~s') = 0$ for each $k\in K_0^c$, and $\tilde{s} \in E_{K_0^c}$. That is, $\left( \left( [0,s') \cap D^{K_0} \right) \setminus \hat D \right) \subseteq E_{K_0^c} \cap D^{K_0}$. Hence, $([0,s']\cap D^{K_0}) \setminus (E_{K_0^c}\cap D^{K_0}) \subseteq \hat D$, and $\tilde{\lambda}\big( (E_{K_0^c}\cap D^{K_0}) \triangle ([0,s']\cap D^{K_0}) \big) =0$.
Therefore, $[0,s']\cap D^{K_0} \in \cG^{D^{K_0}}$ for all $s'\in [0,1)$. By the fact that the class of intervals $\{[0,s')\}_{s'\in [0,1)}$ generates the Borel $\sigma$-algebra on $[0, 1)$, we obtain that $\cG^{D^{K_0}}$ coincides with ${\tilde{\cS}}^{D^{K_0}}$ under $\tilde{\lambda}$. Thus, $\tilde{\cS}$ has a $\cG$-atom $D^{K_0}$ under $\tilde{\lambda}$.
This is again a contradiction.

\subsection{Proof of Propositions~\ref{prop-atom} and \ref{prop-finite}}

\begin{proof}[Proof of Proposition~\ref{prop-atom}]
Define a correspondence
$$G(s) =
\begin{cases}
\{0,1\} & s\in D;\\
\{0\}   & s\notin D.
\end{cases}$$
We claim that $\cI^{(\cS,\cG,\lambda)}_G \neq \cI^{(\cS,\cG,\lambda)}_{\mbox{co} (G)}$.
Let $g_1(s) = \frac{1}{2} \mathbf{1}_{D}$, where $\mathbf{1}_D$ is the indicator function of the set $D$. Then $g_1$ is an $\cS$-measurable selection of $\mbox{co}(G)$. If there is an $\cS$-measurable selection $g_2$ of $G$ such that $\bE^{\lambda}(g_1|\cG) = \bE^{\lambda}(g_2|\cG)$, then there is a subset $D_2 \subseteq D$ such that $g_2(s) = \mathbf{1}_{D_2}$.
Since $D$ is a $\cG$-atom, for any $\cS$-measurable subset $E\subseteq D$, there is a subset $E_1\in \cG$ such that $\lambda(E \triangle( E_1\cap D)) = 0$.  Then
\begin{align*}
\lambda(E\cap D_2)
& = \int_S \mathbf{1}_E(s) g_2(s) \lambda(\rmd s)  = \int_S \bE^{\lambda} \big( \mathbf{1}_{E_1} \mathbf{1}_D g_2|\cG \big) \rmd\lambda = \int_S \mathbf{1}_{E_1} \bE^{\lambda} \big(g_2|\cG \big) \rmd\lambda     \\
& = \int_S  \mathbf{1}_{E_1} \bE^{\lambda} \big(g_1|\cG \big) \rmd\lambda = \frac{1}{2} \int_S  \mathbf{1}_{E_1} \mathbf{1}_{D} \rmd\lambda = \frac{1}{2}\lambda(E_1 \cap D) = \frac{1}{2}\lambda(E).
\end{align*}
Thus, $\lambda(D_2) = \frac{1}{2}\lambda(D) > 0$ by choosing $E = D$. However, $\lambda(D_2) =  \frac{1}{2}\lambda(D_2)$ by choosing $E = D_2$, which implies that $\lambda(D_2) = 0$. This is a contradiction.
\end{proof}

\begin{proof}[Proof of Proposition~\ref{prop-finite}]
Suppose that there exists an $\cS$-measurable selection $g$ of $G$ such that $\bE^{\lambda}(g \rho_n|\cF) = 0$ for any $n\ge 0$. Then there exists a set $E\in \cS$ such that
$$g(s) = \begin{cases}
1  & s\in E;\\
-1 & s\notin E.
\end{cases}$$
Thus,
$$\lambda(E) -\lambda(E^c) = \int_S g\rho_0 \rmd\lambda =\int_S \bE^{\lambda}(g\rho_0|\cF) \rmd\lambda =  0,$$ which implies $\lambda(E) = \frac{1}{2}$. Moreover,
$$\int_S g\varrho_n \rmd\lambda = \int_S g\rho_n \rmd\lambda - \int_S g \rmd\lambda = \int_S \bE^{\lambda}(g\rho_n|\cF)  \rmd\lambda - 0 = 0$$
for each $n\ge 1$, which contradicts with the condition that $\{\varrho_n\}_{n\ge 0}$ is a complete orthogonal system.
\end{proof}

{\small
\singlespacing

\end{document}